\setlist{nosep}
\newtheorem{theorem}{Theorem}[section]
\newtheorem{lemma}[theorem]{Lemma}%[section]
\newtheorem{proposition}[theorem]{Proposition}%[section]
\newtheorem{corollary}[theorem]{Corollary}%[section]
\newtheorem{definition}[theorem]{Definition}%[section]
\newtheorem{problem}[theorem]{Problem}%[section]
\numberwithin{equation}{section}
\def\BC{\mathbb C}
\def\BN{\mathbb N}
\def\BR{\mathbb R}
\def\cA{\mathcal A}
\def\cB{\mathcal B}
\def\cD{\mathcal D}
\def\cL{\mathcal L}
\def\cM{\mathcal M}
\def\cN{\mathcal N}
\def\cO{\mathcal O}
\def\I{\mathrm I}
\def\rd{\mathrm d}
\def\rRe{\mathrm{Re}}
\def\rdiv{\mathrm{div}}
\def\e{\mathrm e}
\def\ri{\mathrm i}
\def\rin{\mathrm{in}}
\def\out{\mathrm{out}}
\def\rspan{\mathrm{span}}
\def\supp{\mathrm{supp}}
\def\Ga{\Gamma}
\def\De{\Delta}
\def\Te{\Theta}
\def\La{\Lambda}
\def\Om{\Omega}
\def\al{\alpha}
\def\be{\beta}
\def\ga{\gamma}
\def\de{\delta}
\def\ve{\varepsilon}
\def\te{\theta}
\def\ze{\zeta}
\def\ka{\kappa}
\def\la{\lambda}
\def\si{\sigma}
\def\vp{\varphi}
\def\om{\omega}
\def\f{\frac}
\def\nb{\nabla}
\def\ov{\overline}
\def\pa{\partial}
\def\wh{\widehat}
\def\wt{\widetilde}
\def\tri{\triangle}
\title{Unique determination of several coefficients\\
in a fractional diffusion(-wave) equation\\
by a single measurement}
\author{
  Yavar \uppercase{Kian}\\
  Aix-Marseille Universit\'e, Universit\'e de Toulon, CNRS, CPT, Marseille, France.\\
  \texttt{yavar.kian@univ-amu.fr}\\
  \And
  Zhiyuan \uppercase{Li}\\
  School of Mathematics and Statistics, Shandong University of Technology\\
  Zibo, Shandong 255049, China.\\
  \texttt{zyli@sdut.edu.cn}\\
  \And
  Yikan \uppercase{Liu}\\
  Graduate School of Mathematical Sciences, The University of Tokyo\\
  3-8-1 Komaba, Meguro-ku, Tokyo 153-8914, Japan.\\
  \texttt{ykliu@ms.u-tokyo.ac.jp}\\
  \And
  Masahiro \uppercase{Yamamoto}\\
  Graduate School of Mathematical Sciences, The University of Tokyo\\
  3-8-1 Komaba, Meguro-ku, Tokyo 153-8914, Japan;\\
  Honorary Member of Academy of Romanian Scientists\\
  Splaiul Independentei Street, No.\! 54, 050094 Bucharest, Romania;\\
  Peoples' Friendship University of Russia (RUDN University)\\
  6 Miklukho-Maklaya Street, Moscow 117198, Russian Federation.\\
  \texttt{myama@ms.u-tokyo.ac.jp}\\
}
\begin{document}
\maketitle

\begin{abstract}
We consider the inverse problem of determining different type of information about a diffusion process, described by ordinary or fractional diffusion equations stated on a bounded domain, like the density of the medium or the velocity field associated with the moving quantities from a single boundary measurement. This properties will be associated with some general class of time independent coefficients that we recover from a single Neumann boundary measurement, on some parts of the boundary, of the solution of our diffusion equation with a suitable boundary input, located on some parts of the boundary.
\end{abstract}

\keywords{Inverse problems\and Fractional diffusion equation\and Single boundary measurement\and Partial data\and Uniqueness}
\MRsubject{35R30\and 35R11\and 58J99}

\section{Introduction}\label{sec-intro}

Let $T\in\BR_+:=(0,\infty)$ and $\al\in(0,2)$. By $\pa_t^\al$ we denote the $\al$-th order Caputo derivative with respect to $t$ defined by
\[
\pa_t^\al f(t):=\left\{\!\begin{alignedat}{2}
& \f1{\Ga(\lceil\al\rceil-\al)}\int_0^t\f{f^{(\lceil\al\rceil)}(s)}{(t-s)^{\al-\lfloor\al\rfloor}}\,\rd s, & \quad & \al\in(0,2)\setminus\{1\},\\
& f'(t), & \quad & \al=1,
\end{alignedat}\right.
\]
where $\lfloor\cdot\rfloor$ and $\lceil\cdot\rceil$ denote floor and ceiling functions, respectively. Let $\Om\subset\BR^d$ ($d=2,3,\ldots$) be a bounded connected domain with a $C^{1,1}$ boundary $\pa\Om$. Let $\rho\in L^\infty(\Om)$, $a\in C^1(\ov\Om)$, $\bm B=(B_1,\ldots,B_d)\in(L^\infty(\Om))^d$, $c\in L^q(\Om)$ ($q\in(d,\infty]$) and assume that
\begin{equation}\label{ell}
\underline\rho\le\rho\le\ov\rho\mbox{ in }\Om,\quad a>0\mbox{ on }\ov\Om\,,\quad c\ge0\mbox{ in }\Om,
\end{equation}
where $\underline\rho\,,\ov\rho\in\BR_+$ are constants. The first object of this paper is the following initial-boundary value problem for a time-fractional diffusion(-wave) equation with a nonhomogeneous Dirichlet boundary condition
\begin{equation}\label{eq1}
\begin{cases}
\rho\,\pa_t^\al u-\rdiv\left(a\nb u\right)+\bm B\cdot\nb u+c\,u=0 & \mbox{in }(0,T)\times\Om,\\
\begin{cases}
u=0 & \mbox{if }0<\al\le1,\\
u=\pa_t u=0 & \mbox{if }1<\al<2
\end{cases} & \mbox{in }\{0\}\times\Om,\\
u=\Phi & \mbox{on }(0,T)\times\pa\Om.
\end{cases}
\end{equation}
Meanwhile, we will also consider the same problem as \eqref{eq1} on manifolds. To this end, let $(\cM,g)$ be a smooth compact connected Riemannian manifold of dimension $d\ge2$ with a boundary $\pa\cM$. Let $\mu\in C^\infty(\cM)$ and $c\in L^\infty(\cM)$ such that $\mu>0$ and $c\ge0$ on $\cM$. We introduce the weighted Laplace-Beltrami operator
\[
\tri_{g,\mu}:=\mu^{-1}\rdiv_g\,\mu\nb_g,
\]
where $\rdiv_g$ and $\nb_g$ denote divergence and gradient operators on $(\cM,g)$ respectively.
%, and $\mu^{\pm1}$ stands for the multiplier by the function $\mu^{\pm1}$. If $\mu$ is identically $1$ in $\cM$, then $\tri_{g,\mu}$ coincides with the usual Laplace-Beltrami operator on $(\cM,g)$. In local coordinates, we have
%\[
%\tri_{g,\mu}u=\sum_{i,j=1}^d\mu^{-1}|g|^{-1/2}\pa_i(\mu|g|^{1/2}g^{ij}\pa^j u),\quad u\in C^\infty(\cM),
%\]
%where $(g^{ij})_{1\le i,j\le d}:=g^{-1}$ and $|g|:=\det g$. For $\al\in(0,2)$,
The second object of this paper is the following initial-boundary value problem on the manifold $\cM$
\begin{equation}\label{eqM1}
\begin{cases}
\pa_t^\al u-\tri_{g,\mu}u+c\,u=0 & \mbox{in }(0,T)\times\cM,\\
\begin{cases}
u=0 & \mbox{if }0<\al\le1,\\
u=\pa_t u=0 & \mbox{if }1<\al<2
\end{cases} & \mbox{in }\{0\}\times\cM,\\
u=\Phi & \mbox{on }(0,T)\times\pa\cM.
\end{cases}
\end{equation}

This article is concerned with the following coefficient inverse problem for \eqref{eq1} and \eqref{eqM1}.

\begin{problem}\label{prob-CIP}
Let $u$ satisfy \eqref{eq1} (or \eqref{eqM1}) and $\Ga_\rin,\Ga_\out$ be two open subsets of $\pa\Om$ (or $\pa\cM$). For a suitably chosen Dirichlet boundary input $\Phi$ supported on the sub-boundary $[0,T]\times\Ga_\rin$, determine simultaneously the coefficients $(\al,\rho,a,\bm B,c)$ (or $(\al,c,\cM)$) from a single measurement of $a\,\pa_\nu u$ on the sub-boundary $(0,T)\times\Ga_\out$, where $\nu$ denotes the outward unit normal vector to $\pa\Om$ (or $\pa\cM$).
\end{problem}

Let us mention that time-fractional diffusion(-wave) equations of the form \eqref{eq1} and \eqref{eqM1} describe diffusion of different kinds of physical phenomena. For $\al\ne1$, \eqref{eq1} and \eqref{eqM1} describe the anomalous diffusion of substances in heterogeneous media, diffusion in inhomogeneous anisotropic porous media, turbulent plasma, diffusion in a turbulent flow, a percolation model in porous media, several biological and financial problems (see \cite{CSLG}). For instance, it is known (see \cite{AG}) that in several context the classical diffusion-advection equation is not a suitable model for describing field data of diffusion of substances in the soil. In this context, time-fractional diffusion(-wave) equations are regarded as an alternative model. Note also that time-fractional diffusion(-wave) equations are derived from continuous-time random walk (see \cite{MK,RA}). Due to their modeling feasibility, time-fractional differential equations have received considerable attention in the last decades. Without being exhaustive we refer to \cite{L,MR,SM,P} for further details. Especially, the well-posedness for problem \eqref{eq1} has been studied by \cite{SY,KY} for $\bm B=0$ and by \cite{JLLY} for $\bm B\ne0$.

On the other hand, the above inverse problem addressed in the present paper corresponds to the determination of several parameters describing the diffusion of some physical quantities. The convection term $\bm B$ is associated with the velocity field of the moving quantities, while the coefficients $(\rho,a,c)$ can be associated to the density of the medium. For instance, our inverse problem can be stated as the determination of the velocity field and the density of the medium in the diffusion process of a contaminant in a soil from a single measurement at $\Ga_\out$.

In retrospect, many authors considered inverse problems for \eqref{eq1} when $\al=1$. Without being exhaustive, we can refer to \cite{CK1,CaK,Cho,ChK,Katchalov2004}. In particular, we mention the work of \cite{AS} where the recovery of the coefficient $c$ has been addressed from a single boundary measurement. The idea of \cite{AS} is to use a suitable input $\Phi$ that allows to recover the associated elliptic Dirichlet-to-Neumann map from a single measurement of the solution of the associated parabolic equation on the lateral boundary $(0,T)\times\pa\Om$. This idea has been extended by \cite{CY} to the recovery of the convection term $\bm B$ in the case of $d=2$. Here the author apply the results \cite{CY1,CY2} related to the recovery of a convection term appearing in a stationary convection diffusion equation from the associated Dirichlet-to-Neumann map.

In contrast to $\al=1$, inverse problems associated with \eqref{eq1} when $\al\in(0,2)\setminus\{1\}$ have received less attention. For $d=1$, \cite{CNYY} determined uniquely the fractional order $\al$ and a coefficient from Dirichlet boundary measurements. In \cite{FK,SY}, the authors considered the problem of stably determining a time-dependent parameter appearing in a time-fractional diffusion equation. In \cite{LIY}, the authors determine uniquely coefficients by mean of the Dirichlet-to-Neumann map associated with the system applied to Dirichlet boundary conditions taking the form $\la(t)\Phi(\bm x)$, where $\la$ is known. In \cite{KOSY}, the authors considered the recovery of a general class of coefficients on a Riemannian manifold and a Riemannian metric from the partial Dirichlet-to-Neumann map, associated with the fractional diffusion equation under consideration, taken at one arbitrarily fixed time. In this work, the authors start by proving the recovery of boundary spectral data associated with the elliptic part of their equation. Then using inverse spectral results and related inverse problems stated in \cite{CK1,CK2,KKL,LO} they complete their uniqueness results. For variable order and distributed order fractional diffusion equations we mention the work of \cite{KSY,LKS} where results related to inverse problems for these equations have been stated. In the work \cite{KY1}, the authors proved the reconstruction of source terms and the stable recovery of some class of zero order time dependent coefficients appearing in fractional diffusion equation on a cylindrical domain. Finally, we mention the recent work of \cite{HLYZ} where the recovery of a Riemannian manifold without boundary has been proved from a single internal measurement of the solution of a fractional diffusion equation with a suitable internal source.

The reminder of this paper is organized as follows. In Sections \ref{sec-main}, we recall the necessary ingredients to treat Problem \ref{prob-CIP} and state the main uniqueness results along with some explanations and remarks. As a key to establishing the main results, in Section \ref{sec-analytic} we prove the time-analyticity of the solutions to the forward problems under consideration. Proofs of the main results is provided in Section \ref{sec-proof}.

\section{Preliminaries and main results}\label{sec-main}

We start by fixing the notations and terminology used in the sequel. Throughout this paper, by $\BN:=\{1,2,\ldots\}$ we denote the positive natural numbers. Let $H^m(\Om)$, $H^{m-1/2}(\pa\Om)$, $W^{m,q}(\Om)$, etc.\! ($m\in\mathbb Z$, $q\in[1,\infty]$) denote the usual Sobolev spaces (see Adams \cite{A75}). Given Banach spaces $X$ and $Y$, by $\cB(X,Y)$ we denote the family of bounded linear operators from $X$ to $Y$. For a connected set $\cO\subset\BR$ or $\BC$, we denote by $C^\om(\cO;X)$ the family of analytic functions in $\cO$ taking values in $X$.

\subsection{Statements of the Main results}

We first deal with the initial-boundary value problem \eqref{eq1} in a bounded domain $\Om\subset\BR^d$. We denote the inner products of $L^2(\Om)$ and $L^2(\pa\Om)$ by $(\,\cdot\,,\,\cdot\,)$ and $\langle\,\cdot\,,\,\cdot\,\rangle$, respectively, that is,
\[
(f_1,f_2):=\int_\Om f_1(\bm x)f_2(\bm x)\,\rd\bm x,\quad f_1,f_2\in L^2(\Om);\qquad\langle f_3,f_4\rangle:=\int_{\pa\Om}f_3(\bm y)f_4(\bm y)\,\rd\si(\bm y),\quad f_3,f_4\in L^2(\pa\Om).
\]
For $f\in L^1_{\mathrm{loc}}(\BR_+)$, we denote its Laplace transform as
\[
\wh f(\xi)=(\cL f)(\xi):=\int_0^\infty\e^{-\xi t}f(t)\,\rd t.
\]
Recall that, according to \cite{SY,KY}, we can define the weak solutions of \eqref{eq1} in the following way.

\begin{definition}\label{d1}
Let $F\in L^1(0,T;L^2(\Om))$. We say that the problem
\begin{equation}\label{eqq1}
\begin{cases}
\rho\,\pa_t^\al u-\rdiv\left(a\nb u\right)+c\,u=F & \mbox{in }(0,T)\times\Om,\\
\begin{cases}
u=0 & \mbox{if }0<\al\le1,\\
u=\pa_t u=0 & \mbox{if }1<\al<2
\end{cases} & \mbox{in }\{0\}\times\Om,\\
u=0 & \mbox{on }(0,T)\times\pa\Om
\end{cases}
\end{equation}
admits a weak solution $u$ if there exists $v\in L^\infty_{\mathrm{loc}}(\BR_+;L^2(\Om))$ such that
\begin{enumerate}
\item[{\rm(i)}]$v|_{(0,T)\times\Om}=u$ and $\inf\{\ve>0\mid\e^{-\ve t}v(t,\,\cdot\,)\in L^1(\BR_+;L^2(\Om))\}=0$, 
\item[{\rm(ii)}] for all $\xi>0$, the Laplace transform $\wh v(\xi,\,\cdot\,)$ of $v(t,\,\cdot\,)$ solves
\[
\left\{\!\begin{alignedat}{2}
& -\rdiv(a\nb\wh v(\xi,\,\cdot\,))+(\xi^\al\rho+c)\wh v(\xi,\,\cdot\,)=\int_0^T\e^{-\xi t}F(t,\,\cdot\,)\,\rd t & \quad & \mbox{in }\Om,\\
& \wh v(\xi,\,\cdot\,)=0 & \quad & \mbox{on }\pa\Om.
\end{alignedat}\right.
\]
\end{enumerate}
\end{definition}

Following \cite{KY,KSY,SY}, there exists $S\in L^1(0,T;\cB(L^2(\Om);H^1(\Om)))$ such that the solution of \eqref{eqq1} takes the form
\[
u(t,\,\cdot\,)=\int_0^t S(t-s)F(s,\,\cdot\,)\,\rd s.
\]
Therefore, regarding the advection term $\bm B\cdot\nb u$ as an additional source term, we define the solution of the problem
\[%begin{equation}\label{eqq2}
\begin{cases}
\rho\,\pa_t^\al u-\rdiv\left(a\nb u\right)+\bm B\cdot\nb u+c\,u=F & \mbox{in }(0,T)\times\Om,\\
\begin{cases}
u=0 & \mbox{if }0<\al\le1,\\
u=\pa_t u=0 & \mbox{if }1<\al<2
\end{cases} & \mbox{in }\{0\}\times\Om,\\
u=0 & \mbox{on }(0,T)\times\pa\Om
\end{cases}
\]%end{equation}
in the mild sense as the solution of the integral equation
\[
u(t,\,\cdot\,)=\int_0^t S(t-s)F(s,\,\cdot\,)\,\rd s-\int_0^t S(t-s)\bm B\cdot\nb u(s,\,\cdot\,)\,\rd s.
\]

For the existence and uniqueness of solutions of \eqref{eq1} in the above sense as well as classical properties of solutions of such problems, we refer to \cite{JLLY,KY,KSY,SY}. In the case $\al=1$, the solution of \eqref{eq1} corresponds to the classical variational solution of this parabolic equation lying in $H^1(0,T;H^{-1}(\Om))\cap L^2(0,T;H^1(\Om))$.

Now we turn to the investigation of Problem \ref{prob-CIP}. For the choice of open sub-boundaries $\Ga_\rin$ and $\Ga_\out$, we assume that
\begin{equation}\label{t1a}
\Ga_\rin\cup\Ga_\out=\pa\Om,\quad\Ga_\rin\cap\Ga_\out\ne\emptyset.
\end{equation}
This condition will be relaxed later in the framework of smooth Riemannian manifolds with smooth coefficients.

Next we specify the choice of the Dirichlet input $\Phi$, which plays an essential role in the consideration of Problem \ref{prob-CIP}. Let $\chi\in C^\infty(\pa\Om)$ satisfy $\supp\,\chi\subset\Ga_\rin$ and $\chi=1$ on $\Ga'_\rin$, where $\Ga'_\rin$ is an open subset of $\pa\Om$ such that $\Ga'_\rin\cup\Ga_\out=\pa\Om$ and $\Ga'_\rin\cap\Ga_\out\ne\emptyset$. We fix $T_0\in(0,T]$ and choose a strictly increasing sequence $\{t_k\}_{k=0}^\infty$ such that $t_0=0$ and $\lim_{k\to\infty}t_k=T_0$. Consider $\{p_k\}_{k=0}^\infty$ a sequence of $\BR_+$ and a sequence $\{\psi_k\}_{k\in\BN}\subset C^\infty(\BR_+;\BR_+)$ such that
\[
\psi_k=\begin{cases}
0 & \mbox{on }(0,t_{2k-2}],\\
p_k & \mbox{on }[t_{2k-1},\infty).
\end{cases}
\]
We fix also  $\{b_k\}_{k=0}^\infty$ a sequence of $\BR_+$ such that
\[
\sum_{k=1}^\infty \left[b_k\|\psi_k\|_{W^{2,\infty}(\BR_+)}\right]<\infty.
\]
Finally, we select a sequence $\{\eta_k\}_{k\in\BN}\subset H^{3/2}(\pa\Om)$ such that $\rspan\{\eta_k\}$ is dense in $H^{3/2}(\pa\Om)$ and $\|\eta_k\|_{H^{3/2}(\pa\Om)}=1$ ($k\in\BN$). Now we can construct the input $\Phi\in C^2(\BR_+;H^{3/2}(\pa\Om))$ as
\begin{equation}\label{Phi}
\Phi(t,\bm x):=\chi(\bm x)\sum_{k=1}^\infty b_k\,\psi_k(t)\eta_k(\bm x).
\end{equation}
Note that clearly, we have $\supp\,\Phi\subset\BR_+\times\Ga_\rin$. Using this definition of the input $\Phi$ we can now state our first two main results in the Euclidean case.

First, we fix $\al\in(0,2)$, $\bm B=\bm0$ and consider the recovery of the coefficients $(\rho,a,c)$. Actually, due to the obstruction described in \cite{CK2,KOSY}, we can only consider the recovery of any two among the three coefficients $\rho,a,c$. Our first main result can be stated as follows.

\begin{theorem}\label{t1}
Let $\al\in(0,2)$ be fixed, $\Ga_\rin,\Ga_\out\subset\pa\Om$ satisfy \eqref{t1a} and the triples $(\rho^j,a^j,c^j)\ (j=1,2)$ fulfill \eqref{ell}. Suppose that either of the following conditions are satisfied:
\begin{enumerate}
\item[{\rm(i)}]$\rho^1=\rho^2$ and
\begin{equation}\label{t1b}
\nb a^1=\nb a^2\quad\mbox{on }\pa\Om.
\end{equation}
\item[{\rm(ii)}]$a^1=a^2$ and
\begin{equation}\label{t1c}
\exists\,C>0,\ |\rho^1(\bm x)-\rho^2(\bm x)|\le C\,\mathrm{dist}(\bm x,\pa\Om)^2,\quad\bm x\in\Om.
\end{equation}
\item[{\rm(iii)}]$c^1=c^2$ and \eqref{t1b}--\eqref{t1c} hold simultaneously true.
\end{enumerate}
Let $u^j\ (j=1,2)$ be the solutions to \eqref{eq1} with $\Phi$ given by \eqref{Phi}, $\bm B=\bm0$ and $(\rho,a,c)=(\rho^j,a^j,c^j)$. Then the condition
\begin{equation}\label{t1d}
a^1\pa_\nu u^1=a^2\pa_\nu u^2\quad\mbox{on }(0,T_0)\times\Ga_\mathrm{out}
\end{equation}
implies $(\rho^1,a^1,c^1)=(\rho^2,a^2,c^2)$.
\end{theorem}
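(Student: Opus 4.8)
The plan is to convert the single lateral Neumann measurement into the partial Dirichlet-to-Neumann (DtN) map of the elliptic operator obtained after a Laplace transform in time, and then to invoke elliptic inverse-problem results. For $\xi>0$ write the elliptic operator $L_\xi^j:=-\rdiv(a^j\nb\,\cdot\,)+(\xi^\al\rho^j+c^j)$, and let $\La_\xi^j\colon f\mapsto a^j\pa_\nu w|_{\Ga_\out}$ be the partial DtN map, where $w$ solves $L_\xi^j w=0$ in $\Om$ with $w=f$ on $\pa\Om$. By Definition \ref{d1}, the Laplace transform $\wh{u^j}(\xi,\cdot)$ solves exactly $L_\xi^j\wh{u^j}(\xi,\cdot)=0$ with Dirichlet datum $\wh\Phi(\xi,\cdot)$, so the whole problem reduces to showing that $\La_\xi^1=\La_\xi^2$ on $\Ga_\out$ for every $\xi>0$ (as maps acting on traces supported in $\Ga_\rin'$).

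First I would exploit linearity together with the layered structure of $\Phi$. By linearity $u^j=\sum_k b_k\,u_k^j$, where $u_k^j$ is the solution with the separable datum $\psi_k(t)\chi(\bm x)\eta_k(\bm x)$, and accordingly $M^j:=a^j\pa_\nu u^j|_{\Ga_\out}=\sum_k b_k M_k^j$ with $M_k^j:=a^j\pa_\nu u_k^j|_{\Ga_\out}$. Since $\psi_k$ vanishes on $(0,t_{2k-2}]$, only the modes $i\le k$ are active on $(0,t_{2k})$, which opens an induction on $k$. Assuming $M_i^1=M_i^2$ on $\BR_+$ for every $i<k$, the measurement identity \eqref{t1d} on $(0,t_{2k})\subset(0,T_0)$ forces $M_k^1=M_k^2$ on $(0,t_{2k})$. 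Because $\psi_k\equiv p_k$ on $[t_{2k-1},\infty)$, the datum of $u_k^j$ is time-independent there, so by the time-analyticity established in Section \ref{sec-analytic} each $M_k^j$ is real-analytic on $(t_{2k-1},\infty)$; as the two agree on the open subinterval $(t_{2k-1},t_{2k})$, they agree on all of $(t_{2k-1},\infty)$, hence on $\BR_+$. Taking the Laplace transform (which is legitimate for all $\xi>0$ by the growth bound in Definition \ref{d1}(i)) and using that the datum factors as $\psi_k\otimes(\chi\eta_k)$, one gets $\wh{\psi_k}(\xi)\,\La_\xi^1(\chi\eta_k)=\wh{\psi_k}(\xi)\,\La_\xi^2(\chi\eta_k)$ on $\Ga_\out$; since $\psi_k\ge0$ is not identically zero, $\wh{\psi_k}(\xi)\ne0$, whence $\La_\xi^1(\chi\eta_k)=\La_\xi^2(\chi\eta_k)$ for all $k$ and all $\xi>0$. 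Density of $\rspan\{\eta_k\}$ in $H^{3/2}(\pa\Om)$ and $\chi\equiv1$ on $\Ga_\rin'$ then yield $\La_\xi^1=\La_\xi^2$ on $\Ga_\out$ for all inputs supported in $\Ga_\rin'$, for every $\xi>0$.

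With the partial DtN maps identified, I would fix $\xi$ and read the identity as equality of partial Cauchy data for the second-order elliptic operators $L_\xi^1$ and $L_\xi^2$, under the overlap hypothesis \eqref{t1a} on $(\Ga_\rin',\Ga_\out)$. Invoking the partial-data uniqueness results for the conductivity/Schr\"odinger problem used in \cite{KOSY} (building on \cite{CK1,CK2}), one recovers the leading coefficient and the zeroth-order potential. The role of the boundary hypotheses is exactly to make this passage legitimate: \eqref{t1b} forces the relevant boundary jet of $a^1-a^2$ to vanish on $\pa\Om$, while \eqref{t1c} makes $\rho^1-\rho^2$ vanish to second order at $\pa\Om$, so that the potential difference $\xi^\al(\rho^1-\rho^2)$ has vanishing boundary trace; both are what is needed to reduce to the Liouville-gauged Schr\"odinger form and apply the partial-data theorem. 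Thus in case (i) one obtains $a^1=a^2$ and $\xi^\al\rho^1+c^1=\xi^\al\rho^2+c^2$; in case (ii), $a^1=a^2$ is assumed and the theorem delivers the potential identity; in case (iii), \eqref{t1b}--\eqref{t1c} give $a^1=a^2$ together with the potential identity, with $c^1=c^2$ already known.

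Finally I would separate the coefficients through the explicit $\xi$-dependence. The identity $\xi^\al\rho^1+c^1=\xi^\al\rho^2+c^2$ holds for all $\xi>0$; since $\xi\mapsto\xi^\al$ is a bijection of $\BR_+$, matching the $\xi^\al$-coefficient against the constant term forces $\rho^1=\rho^2$ and $c^1=c^2$, which closes each of (i)--(iii). This is also the point where the obstruction recorded in \cite{CK2,KOSY} enters: the gauge invariance of the operator only permits recovery of two of the three coefficients $(\rho,a,c)$, which is precisely why one of them is fixed as a hypothesis in each case. The main obstacle is the very first reduction, namely that a \emph{single} time-dependent measurement encodes the entire family $\{\La_\xi^j\}_{\xi>0}$. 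This rests on the time-analyticity on the plateaus $[t_{2k-1},\infty)$---that a solution with eventually time-independent boundary datum is analytic in $t$ in spite of the nonlocal memory of the Caputo derivative---which is exactly what Section \ref{sec-analytic} is designed to provide; granted this, the inductive decoupling and the Laplace-transform bookkeeping are routine.
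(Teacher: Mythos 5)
Your first half is sound and is essentially the paper's own Steps 1--2: the induction on $k$ using that $\psi_k$ vanishes on $(0,t_{2k-2}]$, the time-analyticity of Proposition \ref{l1} to propagate $a^1\pa_\nu u^1_k=a^2\pa_\nu u^2_k$ from a bounded time interval to all of $\BR_+$, and then the Laplace transform together with $\cL[\psi_k](\xi)>0$, linearity, and density of $\rspan\{\eta_k\}$ in $H^{3/2}(\pa\Om)$ to conclude $\La^1(\xi)=\La^2(\xi)$ for every $\xi>0$. Up to that point you and the paper coincide.

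The second half has a genuine gap. You propose, for each \emph{fixed} $\xi$, to invoke ``partial-data uniqueness results for the conductivity/Schr\"odinger problem'' under hypothesis \eqref{t1a}. No such theorem exists at this level of generality: \eqref{t1a} allows arbitrary open sets with $\Ga_\rin\cup\Ga_\out=\pa\Om$ and $\Ga_\rin\cap\Ga_\out\ne\emptyset$, whereas the known partial-data results in dimension $d\ge3$ (Bukhgeim--Uhlmann, Kenig--Sj\"ostrand--Uhlmann, Kenig--Salo, Isakov) all require geometric restrictions on the input/measurement sets, and the general overlapping-sets problem is open; moreover, recovering the leading coefficient $a$ (needed in cases (i) and (iii)) from partial data with only $a\in C^1(\ov\Om)$, $\rho\in L^\infty(\Om)$, $c\in L^q(\Om)$ is beyond CGO-based methods even with favorable geometry. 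You also mischaracterize \cite{KOSY}: that paper does not apply fixed-$\xi$ Calder\'on-type theorems either; it recovers \emph{boundary spectral data} and then invokes inverse spectral results. That is exactly what the present paper does, and what its comments section explicitly contrasts with the DtN-based route of \cite{AS,CY}. Concretely, in Step 3 the paper exploits the $\xi$-dependence of the whole family $\{\La^j(\xi)\}_{\xi>0}$: expanding $U^j(\xi)$ in eigenfunctions, differentiating in $\xi$, and arguing as in Step 4 of the proof of \cite[Theorem 2.2]{KOSY}, it extracts $\la^1_k=\la^2_k$, $m^1_k=m^2_k$ and $\Te^1_k=\Te^2_k$ on $\Ga_\out\times\Ga'_\rin$; Step 4 then upgrades this to full boundary spectral data \eqref{t1m} via a \cite{CK1}-type argument and applies the Canuto--Kavian inverse spectral theorem \cite[Corollaries 1.5--1.7]{CK2} (Lemma \ref{p1}), whose hypotheses are precisely (i)/(ii)/(iii) with \eqref{t1b}--\eqref{t1c} serving as gauge-fixing conditions for the spectral problem, not as trace conditions enabling a Liouville reduction. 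In particular, your final step of separating $\rho$ and $c$ by varying $\xi$ never occurs in the paper (the spectral theorem delivers all three coefficients at once), and the missing middle step of your argument cannot be repaired by citing existing partial-data Calder\'on results.
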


Second, we focus our attention on the simultaneous recovery of the convection term $\bm B$, the weight $\rho$ and the fractional power $\al$ under the assumption that $a=1$, $c=0$. Our second main result can be stated as follows.

\begin{theorem}\label{t2}
Let $d\ge3,\ \al^j\in(0,2),\ \rho^j\in L^\infty(\Om)$ satisfy \eqref{ell} and $\bm B^j\in(C^\ka(\ov\Om))^d$ with $\ka\in(2/3,1)\ (j=1,2)$. Let $u^j\ (j=1,2)$ be the solutions to \eqref{eq1} with  $a\equiv1,\ c\equiv0$, $(\al,\rho,\bm B)=(\al^j,\rho^j,\bm B^j)$ and for $\Phi$ given by \eqref{Phi} with $\chi\equiv1$. Then the condition
\[%begin{equation}\label{t2a}
\pa_\nu u^1=\pa_\nu u^2\quad\mbox{on }(0,T_0)\times\pa\Om
\]%end{equation}
implies $(\al^1,\rho^1,\bm B^1)=(\al^2,\rho^2,\bm B^2)$.
\end{theorem}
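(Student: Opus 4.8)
The plan is to follow the strategy initiated in \cite{AS} and extended in \cite{CY}: exploit the special structure of the input $\Phi$ together with the time-analyticity of the solution in order to reduce the single Neumann measurement to the knowledge of a whole family of elliptic Dirichlet-to-Neumann (DtN) maps, and then invoke the stationary convection-diffusion inverse results of \cite{CY1,CY2}. Since here $\chi\equiv1$ and the data are recorded on the full boundary $\pa\Om$, this is a full-data problem, so no partial-data refinement is needed.

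First I would pass to the Laplace side. By the time-analyticity established in Section \ref{sec-analytic}, each map $t\mapsto\pa_\nu u^j(t,\,\cdot\,)$ extends to an analytic $H^{1/2}(\pa\Om)$-valued function on $\BR_+$; hence the difference $\pa_\nu u^1-\pa_\nu u^2$, which vanishes on the open set $(0,T_0)$, vanishes on all of $\BR_+$. Taking the Laplace transform in $t$ and using that the vanishing initial conditions turn $\cL[\rho^j\,\pa_t^{\al^j}u^j](\xi)$ into $\xi^{\al^j}\rho^j\,\wh{u^j}(\xi,\,\cdot\,)$ (in the weak sense of Definition \ref{d1}), the transform $\wh{u^j}(\xi,\,\cdot\,)$ solves
\[
-\Delta\wh{u^j}+\bm B^j\cdot\nb\wh{u^j}+\xi^{\al^j}\rho^j\,\wh{u^j}=0\ \mbox{ in }\Om,\qquad\wh{u^j}=\wh\Phi(\xi,\,\cdot\,)\ \mbox{ on }\pa\Om,
\]
and, for every $\xi>0$, the two solutions carry the same Neumann trace. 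Since $\xi^{\al^j}\rho^j\ge0$, the associated DtN map $\La^j_\xi$ sending $\wh\Phi(\xi,\,\cdot\,)$ to $\pa_\nu\wh{u^j}(\xi,\,\cdot\,)$ is well defined.

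The separation step is, to my mind, the technical heart. Writing $\wh\Phi(\xi,\,\cdot\,)=\sum_k b_k\wh{\psi_k}(\xi)\eta_k$ and setting $g_k(\xi):=(\La^1_\xi-\La^2_\xi)\eta_k\in H^{1/2}(\pa\Om)$, equality of the Neumann traces reads $\sum_k b_k\wh{\psi_k}(\xi)\,g_k(\xi)=0$ on $\pa\Om$ for all $\xi>0$. The purpose of choosing $\psi_k$ supported in $[t_{2k-2},\infty)$ with $\{t_{2k-2}\}$ strictly increasing is that the transforms $\wh{\psi_k}(\xi)$ carry the mutually distinct exponential rates $\e^{-\xi\,t_{2k-2}}$ as $\xi\to\infty$. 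As the DtN differences $g_k(\xi)$ grow at most polynomially in $\xi$ (the potential $\xi^{\al^j}\rho^j$ contributes $O(\xi^{\al^j/2})$ to the map), these exponential gaps dominate, and an induction on $k$ peeling off the slowest-decaying term yields $g_k\equiv0$ for every $k$. Because $\rspan\{\eta_k\}$ is dense in $H^{3/2}(\pa\Om)$, this upgrades to $\La^1_\xi=\La^2_\xi$ as operators, for all $\xi>0$.

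Finally I would fix $\xi>0$ and apply the uniqueness results of \cite{CY1,CY2} for the stationary equation $-\Delta+\bm B\cdot\nb+q$, valid in dimension $d\ge3$ and for the Hölder regularity $\ka\in(2/3,1)$ assumed on $\bm B^j$: equality of the full DtN maps forces $\bm B^1=\bm B^2$ and $\xi^{\al^1}\rho^1=\xi^{\al^2}\rho^2$. As the latter identity holds for all $\xi>0$ while $\underline\rho\le\rho^j\le\ov\rho$, taking logarithms gives $(\al^1-\al^2)\log\xi=\log(\rho^2/\rho^1)$ independent of $\xi$, whence $\al^1=\al^2$ and then $\rho^1=\rho^2$. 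I expect the main obstacles to be, first, making the mode-separation rigorous, namely controlling the tail of the series uniformly in $\xi$ in the right norms (this is precisely where the summability hypothesis on $b_k\|\psi_k\|_{W^{2,\infty}(\BR_+)}$ and a uniqueness property for exponential sums with polynomially growing coefficients enter), and second, verifying that the hypotheses of the stationary convection-diffusion result genuinely hold at the borderline regularity $\ka>2/3$, which dictates the restriction $d\ge3$ in the statement.
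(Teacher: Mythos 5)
Your overall skeleton---reduce the single measurement to a family of elliptic Dirichlet-to-Neumann maps $\cN^j(\xi)$, $\xi>0$, apply stationary convection--diffusion uniqueness, and finally exploit the $\xi$-dependence of the potential $\xi^{\al^j}\rho^j$ to split off $\al$ and $\rho$---is exactly the paper's, and your endgame for $(\al,\rho)$ is fine. But the passage from the single measurement to the DtN equality contains two genuine gaps. First, your opening claim, that $t\longmapsto\pa_\nu u^j(t,\,\cdot\,)$ extends analytically to $\BR_+$ so that the data equality on $(0,T_0)$ propagates to all times, is false. The input \eqref{Phi} is an infinite superposition of modes $b_k\,\psi_k(t)\eta_k$ whose activation times $t_{2k-2}$ accumulate at $T_0$, and each $\psi_k$ is $C^\infty$ but \emph{not} real-analytic (it vanishes identically on $(0,t_{2k-2}]$ and equals $p_k$ on $[t_{2k-1},\infty)$); hence the boundary datum itself, and with it the solution and its Neumann trace, is not time-analytic on $(0,T_0)$. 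Analyticity holds only mode by mode, and only on $(t_{2k-1}+\ve_k,\infty)$; proving this (Propositions \ref{l1} and \ref{l2} --- the case $\bm B\ne\bm0$ requiring the whole iteration scheme of Subsection \ref{sec-analytic-general}) is the technical core of the paper. Accordingly, the paper separates the modes \emph{in the time domain first}, by induction on $k$: on $(0,t_{2k})$ only the first $k$ modes are active, so the measured equality together with the inductive hypothesis isolates the $k$-th mode's Neumann traces on a nonempty subinterval, and per-mode analyticity then extends that equality to all of $\BR_+$.

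Second, even granting the Laplace-side identity $\sum_k b_k\wh{\psi_k}(\xi)g_k(\xi)=0$, your peeling argument cannot deliver $g_k(\xi)=0$ at a \emph{fixed} $\xi$: comparing exponential rates as $\xi\to\infty$ only shows that $g_1(\xi)$ decays as $\xi\to\infty$, and an analytic, exponentially decaying function need not vanish, so no induction closes at finite $\xi$ --- yet the proof needs the DtN equality at specific values ($\xi\to0$ to recover $\bm B$, then two distinct values of $\xi$ to recover $\rho$ and $\al$). In the paper this difficulty never arises: after the time-domain mode separation, the Laplace transform of each separate identity factorizes as $b_k\wh{\psi_k}(\xi)\left(\cN^1(\xi)-\cN^2(\xi)\right)\eta_k=0$, and since $b_k>0$ and $\wh{\psi_k}(\xi)>0$ (because $\psi_k\ge0$, $\not\equiv0$), one gets $(\cN^1(\xi)-\cN^2(\xi))\eta_k=0$ exactly, for every $k$ and every $\xi>0$; density of $\rspan\{\eta_k\}$ in $H^{3/2}(\pa\Om)$ finishes this step (the paper also verifies the decay estimates making these Laplace transforms well defined). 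A further, more minor point: for $d\ge3$ the stationary uniqueness results to invoke are those of \cite{Po} (Theorem 1.1 for $\bm B$ after sending $\xi\to0$, Proposition 2.1 for the potential), where the hypotheses $d\ge3$ and $\ka\in(2/3,1)$ originate; \cite{CY1,CY2} are two-dimensional results, which the paper cites only in the remark about extending Theorem \ref{t2} to $d=2$.
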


Now we turn to the simultaneous recovery of the manifold $(\cM,g)$ and the coefficients $(\mu,c)$ from a single measurement of the solution of \eqref{eqM1} on $\Ga_\out$ in some suitable sense.

{\bf Similarly to that in the Euclidean case, we select $\Ga'_\rin\subset\pa\cM$ and $\chi\in C^\infty(\pa\cM)$ such that $\supp\,\chi\subset\Ga_\rin$ and $\chi=1$ in $\Ga'_\rin$. Then we define the Dirichlet input $\Psi$ in \eqref{eqM1} exactly the same as \eqref{Phi}, where $\{\eta_k\}_{k\in\BN}$ is a sequence of $H^{3/2}(\pa\cM)$ such that $\rspan\{\eta_k\}$ is dense in $H^{3/2}(\pa\cM)$ and $\|\eta_k\|_{H^{3/2}(\pa\cM)}=1$ ($k\in\BN$).}

We will state three different extensions of Theorem \ref{t1}, among which the first one can be stated as follows.

\begin{corollary}\label{t4}
For $j=1,2$, let $(\cM^j,g^j)$ be two compact and smooth connected Riemannian manifolds of dimension $d\ge2$ with the same boundary, and let $\mu^j\in C^\infty(\cM^j)$ and $c^j\in C^\infty(\cM^j)$ satisfy $\mu^j>0$ and $c^j\ge0$ on $\cM^j$. Suppose
\[
\Ga_\rin=\Ga_\out\subset\pa\cM^1,\quad g^1=g^2,\ \mu^1=\mu^2=1,\ \pa_\nu\mu^1=\pa_\nu\mu^2=0\mbox{ on }\pa\cM^1.
\]
Denote by $u^j$ ($j=1,2$) the solution of \eqref{eqM1} with $\Phi$ given by \eqref{Phi} and $(\cM,g,\mu,c)=(\cM^j,g^j,\mu^j,c^j)$. Then the condition
\begin{equation}\label{t4d}
\pa_\nu u^1=\pa_\nu u^2\quad\mbox{in }(0,T_0)\times\Ga_\out
\end{equation}
implies that $(\cM^1,g^1)$ and $(\cM^2,g^2)$ are isometric. Moreover, \eqref{t4d} implies that there exist $\psi\in C^\infty(\cM^2;\cM^1)$ and $\ka\in C^\infty(\cM^2)$ satisfying
\begin{equation}\label{kappa}
\ka=1,\ \pa_\nu\ka=0\quad\mbox{on }\pa\cM^2
\end{equation}
such that
\begin{equation}\label{mu_c}
\mu^2=\ka^{-2}\mu^1\circ\psi,\quad c^2=c^1\circ\psi-\ka\tri_{g^2,\mu^1}\ka^{-1}.
\end{equation}
\end{corollary}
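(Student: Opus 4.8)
The plan is to mirror the strategy behind Theorem \ref{t1}: reduce the single time-domain measurement to the partial Dirichlet-to-Neumann (DtN) map of an associated family of elliptic operators, and then feed this into the inverse spectral / boundary-control machinery for the weighted operator $-\tri_{g,\mu}+c$. First I would exploit the time-analyticity of $t\mapsto\pa_\nu u^j(t,\cdot)$ proved in Section \ref{sec-analytic}: both Neumann traces being analytic on $\BR_+$ with values on $\Ga_\out$, the equality \eqref{t4d} on the finite interval $(0,T_0)$ propagates to $\pa_\nu u^1=\pa_\nu u^2$ on all of $\BR_+\times\Ga_\out$. Taking the Laplace transform in $t$ turns \eqref{eqM1} into, for each $\xi>0$, the elliptic problem
\[
-\tri_{g^j,\mu^j}\wh u^j(\xi,\cdot)+(c^j+\xi^\al)\wh u^j(\xi,\cdot)=0\ \text{ in }\cM^j,\qquad\wh u^j(\xi,\cdot)=\wh\Phi(\xi,\cdot)\ \text{ on }\pa\cM^j,
\]
so that the data become $\pa_\nu\wh u^1(\xi,\cdot)=\pa_\nu\wh u^2(\xi,\cdot)$ on $\Ga_\out$ for every $\xi>0$.

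Next I would recover the full partial DtN map from this single family. By linearity $\wh u^j(\xi,\cdot)=\chi\sum_k b_k\wh\psi_k(\xi)\,w_k^j(\xi,\cdot)$, where $w_k^j(\xi,\cdot)$ solves the above elliptic problem with boundary datum $\chi\eta_k$. The staggered supports of the $\psi_k$ together with the causality of \eqref{eqM1} allow one to peel off the contributions one at a time: on $(0,t_2)$ only $\psi_1$ is active, so the measurement there determines the Neumann trace of $w_1^j$, and more generally on $(0,t_{2k})$ only $\psi_1,\dots,\psi_k$ are active, so that the already recovered traces of $w_1^j,\dots,w_{k-1}^j$ can be subtracted to isolate $w_k^j$. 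Combining this with analyticity in $t$ and the Laplace transform yields $\pa_\nu w_k^1(\xi,\cdot)=\pa_\nu w_k^2(\xi,\cdot)$ on $\Ga_\out$ for all $k$ and all $\xi>0$. Since $\chi\equiv1$ on $\Ga'_\rin$ and $\rspan\{\eta_k\}$ is dense in $H^{3/2}(\pa\cM)$, this identifies the partial DtN maps of the two elliptic operators, restricted to $\Ga_\out$, for every $\xi>0$.

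Finally I would convert this DtN information into geometry. Writing $\la=\xi^\al$ and continuing the DtN map $\La^j_\xi$ meromorphically in $\la$, its poles and residues encode the boundary spectral data of $-\tri_{g^j,\mu^j}+c^j$, namely the Dirichlet eigenvalues together with the Neumann traces on $\Ga_\out$ of the corresponding eigenfunctions; equality of DtN maps for all $\xi$ thus gives equality of these boundary spectral data. At this stage I would invoke the boundary-control and inverse spectral results of \cite{KKL} and their adaptation in \cite{KOSY}: the boundary spectral data determine $(\cM,g)$ up to an isometry $\psi$ and the pair $(\mu,c)$ up to the gauge transformation \eqref{mu_c}, with $\ka$ the associated positive conformal factor. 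The normalizations $g^1=g^2$, $\mu^1=\mu^2=1$ and $\pa_\nu\mu^j=0$ on $\pa\cM^1$ then force $\ka=1$ and $\pa_\nu\ka=0$ on $\pa\cM^2$, i.e.\ \eqref{kappa}.

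The hard part will be the passage from the single partial-boundary measurement on $\Ga_\rin=\Ga_\out$ to the equality of boundary spectral data: one must rigorously justify both the decoupling of the densely many inputs $\eta_k$ (using only analyticity, causality and the staggered profiles $\psi_k$) and the meromorphic structure of $\xi\mapsto\La^j_\xi$ that extracts eigenvalues and eigenfunction traces, all in the local-data regime where input and measurement share the same piece of boundary. Once the boundary spectral data are in hand, the geometric conclusion is a direct application of the cited inverse spectral theory together with the gauge analysis for $\tri_{g,\mu}$.
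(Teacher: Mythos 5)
Your proposal is correct and follows essentially the same route as the paper: inductive peeling of the staggered inputs via the time-analyticity of Proposition \ref{l1}, Laplace transform to identify the partial elliptic Dirichlet-to-Neumann maps for all $\xi>0$, extraction of the boundary spectral data \eqref{t1j}--\eqref{t1k} (the paper differentiates in $\xi$ and invokes Step 4 of the proof of \cite[Theorem 2.2]{KOSY}, which is the pole/residue argument you describe), and finally the inverse spectral results of \cite{KKL}/\cite{KOSY} yielding the isometry and the gauge conditions \eqref{kappa}--\eqref{mu_c}. The only caveat is that your opening assertion that $\pa_\nu u^j$ is analytic on all of $\BR_+$ is an overstatement (the input $\Phi$ is not time-analytic), but the mode-by-mode peeling in your second paragraph is precisely the correct argument and is what the paper carries out.
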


We can also extend our results to the case where the excitation and the measurements are disjoint. To this end, we need the following definition.

\begin{definition}
Consider the initial-boundary value problem for a hyperbolic equation with Dirichlet data $\Psi\in C_0^\infty((0,\infty)\times\pa\cM)$
\begin{equation}\label{eq_wave}
\begin{cases}
(\pa_t^2-\tri_g+c)u=0 & \mbox{in }(0,\infty)\times\cM,\\
u=\pa_t u=0 & \mbox{in }\{0\}\times\cM,\\
u=\Psi & \mbox{on }(0,\infty)\times\pa\cM,
\end{cases}
\end{equation}
where $\tri_g=\tri_{\mu,g}$ with $\mu=1$. We say that \eqref{eq_wave} is exactly controllable from a sub-boundary $\Ga\subset\pa\cM$ if there exists $T>0$ such that the map
\[%begin{equation}\label{def_exact_controllability}
L^2((0,T)\times\Ga)\ni\Psi\longmapsto(u(T,\,\cdot\,),\pa_t u(T,\,\cdot\,))\in L^2(\cM)\times H^{-1}(\cM)
\]%end{equation}
is surjective.
\end{definition}

We refer to \cite{Bardos1992} for geometrical conditions that guarantee the exact controllability of \eqref{eq_wave}. We can now state the following two results with data on disjoint sets.

\begin{corollary}\label{t5}
Let $(\cM^j,g^j)$ ($j=1,2$) be two compact and smooth connected Riemannian manifolds of dimension $d\ge2$ with the same boundary. Denote by $u^j$ ($j=1,2$) the solution of \eqref{eqM1} with $\Phi$ given by \eqref{Phi}, $(\cM,g)=(\cM^j,g^j)$ ($j=1,2$) and $\mu=1,\ c=0$ on $\cM^1$. In addition, we assume that \eqref{eq_wave} is exactly controllable from $\Ga'_\rin$ and $\Ga_\rin\cap\Ga_\out=\emptyset$. Then \eqref{t4d} implies that $(\cM^1,g^1)$ and $(\cM^2,g^2)$ are isometric.
\end{corollary}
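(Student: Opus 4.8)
The plan is to reuse the two-stage strategy behind Theorem~\ref{t1} and Corollary~\ref{t4}: first turn the single time-domain measurement \eqref{t4d} into knowledge of a partial Dirichlet-to-Neumann map for the stationary problem at every Laplace frequency, and then feed this boundary data into a geometric inverse result to extract the isometry. The only genuinely new feature relative to Corollary~\ref{t4} is that the excitation patch $\Ga'_\rin$ and the observation patch $\Ga_\out$ are now disjoint, and this is exactly where the exact controllability hypothesis on \eqref{eq_wave} from $\Ga'_\rin$ will be spent.

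First I would invoke the time-analyticity of the forward solutions proved in Section~\ref{sec-analytic}: since $t\mapsto u^j(t,\,\cdot\,)$ lies in $C^\om(\BR_+;\,\cdot\,)$ together with its boundary trace, the equality \eqref{t4d} on $(0,T_0)\times\Ga_\out$ propagates to all of $(0,\infty)\times\Ga_\out$ by analytic continuation in $t$. Because the solutions grow sub-exponentially in the sense of Definition~\ref{d1}(i), the Laplace transform is available for every $\xi>0$ and yields
\[
\pa_\nu\wh{u}^1(\xi,\,\cdot\,)=\pa_\nu\wh{u}^2(\xi,\,\cdot\,)\quad\mbox{on }\Ga_\out,\qquad\xi>0,
\]
where $\wh{u}^j(\xi,\,\cdot\,)$ solves $(\xi^\al-\tri_{g^j,\mu^j}+c^j)\wh{u}^j(\xi,\,\cdot\,)=0$ on $\cM^j$ with common Dirichlet data $\wh\Phi(\xi,\,\cdot\,)=\chi\sum_k b_k\wh{\psi_k}(\xi)\eta_k$ on $\pa\cM$. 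Since $\{\eta_k\}$ is dense in $H^{3/2}(\pa\cM)$ and the staggered time profiles $\psi_k$ are arranged so that the contributions of the individual $\eta_k$ can be disentangled as $\xi$ ranges over $\BR_+$, the single measurement encodes the action of the partial Dirichlet-to-Neumann operator on a dense set of inputs supported in $\Ga_\rin$. This step I would transcribe from the proof of Theorem~\ref{t1}; it delivers the identity of the partial maps $\La^1_\xi=\La^2_\xi$ sending Dirichlet data supported in $\Ga'_\rin$ to Neumann data on $\Ga_\out$, for all $\xi>0$.

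Next I would continue $\xi\mapsto\La^j_\xi$ analytically in the spectral parameter $\la=-\xi^\al$ to expose its meromorphic structure: the poles recover the Dirichlet eigenvalues of $-\tri_{g^j}+c^j$, and the residues recover the boundary traces $\pa_\nu\phi^j_n|_{\Ga_\out}$ paired against inputs on $\Ga'_\rin$, i.e.\ the boundary spectral data read between the two disjoint patches. To run the Belishev--Kurylev boundary control machinery (cf.\ \cite{KKL,KOSY}), which reconstructs $(\cM,g)$ up to isometry from spectral data posed on a single observation set, I would use the exact controllability of \eqref{eq_wave} from $\Ga'_\rin$: controllability guarantees that boundary sources on $\Ga'_\rin$ generate a complete family of wavefields in $\cM$, so that their Neumann traces measured on $\Ga_\out$ suffice to evaluate the inner products required by the control method. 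In effect the $\Ga'_\rin\to\Ga_\out$ response is thereby promoted to information equivalent to a single-patch response operator, and the Gelfand--Levitan/BC reconstruction returns $(\cM^1,g^1)$ and $(\cM^2,g^2)$ as isometric; the normalization $\mu=1$, $c=0$ on the reference side $\cM^1$ is what fixes the metric without an accompanying coefficient gauge.

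The main obstacle I anticipate is this controllability bridge. With $\Ga'_\rin$ and $\Ga_\out$ disjoint, the measured quantity is a genuinely two-sided boundary operator rather than the self-observation that the boundary control method classically consumes, and the delicate point is to show rigorously that exact controllability from $\Ga'_\rin$ upgrades the $\Ga'_\rin\to\Ga_\out$ data to something on which Tataru's unique continuation and the resulting boundary-distance reconstruction can be applied. By contrast, the analyticity and Laplace-transform reductions of the first two steps are essentially routine repetitions of the arguments already established for Theorem~\ref{t1} and Corollary~\ref{t4}.
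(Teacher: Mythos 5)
Your first two steps (time-analyticity, Laplace transform in $t$, and extraction of the partial boundary spectral data of \eqref{t1j}--\eqref{t1k}, i.e.\ common eigenvalues with multiplicities and the kernels $\Te_k(\bm x,\bm x')$ for $\bm x'\in\Ga'_\rin$, $\bm x\in\Ga_\out$) follow the paper's route; your meromorphic-continuation phrasing is a cosmetic variant of the paper's differentiation-in-$\xi$ argument borrowed from \cite{KOSY}. The genuine gap is your third step. You propose to run the Belishev--Kurylev boundary control method directly on the disjoint-set spectral data, using exact controllability from $\Ga'_\rin$ to ``promote'' the $\Ga'_\rin\to\Ga_\out$ response to a single-patch response operator --- and you yourself flag this as an unresolved obstacle. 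It is indeed the crux, and it cannot be treated as a routine bridge: the BC method's evaluation of inner products of waves from boundary data (the Blagovestchenskii-type identity) requires sources and observations on a common set, and exact controllability alone does not obviously let you recover those inner products from a purely two-sided $\Ga'_\rin\to\Ga_\out$ operator. Making the method work with disjoint input and output sets was the subject of a separate, substantial work, namely Lassas--Oksanen \cite{LO}; it is not something one re-derives in a few lines.

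The paper closes this step by a reduction you are missing. Having \eqref{t1j}--\eqref{t1k}, it uses the spectral representation of solutions of the wave equation \eqref{eq_wave} --- repeating arguments of \cite[Theorem 5.3]{KOSY} --- to show that the two \emph{hyperbolic} partial Dirichlet-to-Neumann maps
\[
\cN^j:C^\infty(\BR_+\times\Ga'_\rin)\ni\Phi\longmapsto\left.\pa_\nu u^j\right|_{(0,\infty)\times\Ga_\out},\quad j=1,2,
\]
coincide, where $u^j$ solves the wave equation on $(\cM^j,g^j)$. It then invokes \cite[Theorem 1]{LO}, which is precisely the uniqueness theorem for the Riemannian wave equation with Dirichlet data and Neumann data on disjoint sets; the exact controllability hypothesis from $\Ga'_\rin$ in Corollary \ref{t5} is exactly a hypothesis of that theorem, so it is consumed inside the citation rather than in a do-it-yourself upgrade of the data. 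To repair your proposal: after obtaining \eqref{t1j}--\eqref{t1k}, pass to the equality $\cN^1=\cN^2$ of hyperbolic maps, and then cite \cite{LO} to conclude the isometry.
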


\begin{corollary}\label{t6}
Let $(\cM,g)$ be a compact and smooth connected Riemannian manifolds of dimension $d\ge2$. Let $\mu=1$ on $\cM$ and $c^j\in C^\infty(\cM)$ ($j=1,2$) be non-negative. We assume that \eqref{eq_wave} is exactly controllable from $\Ga'_\rin$, $\Ga_\out$ is strictly convex and $\Ga_\rin\cap\Ga_\out=\emptyset$. Then \eqref{t4d} implies that there exists a neighborhood $U$ of $\Ga_\out$ in $\cM$ such that $c^1=c^2$ in $U$.
\end{corollary}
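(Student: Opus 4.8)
The plan is to reduce the single boundary measurement to an equality of partial Dirichlet-to-Neumann maps for the associated elliptic problems, to transfer this into matching boundary data for the hyperbolic equation \eqref{eq_wave}, and finally to exploit the strict convexity of $\Ga_\out$ through a sharp unique continuation argument in order to localize the determination of the potential near $\Ga_\out$. Throughout I use $\mu=1$, so that $\tri_{g,\mu}=\tri_g$.

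First I would recover the partial Dirichlet-to-Neumann maps. The Laplace transform $\wh u^j(\xi,\cdot)$ of the solution of \eqref{eqM1} solves the elliptic problem $(-\tri_g+c^j+\xi^\al)\wh u^j(\xi)=0$ in $\cM$ with Dirichlet data $\wh\Phi(\xi,\cdot)=\chi\sum_k b_k\wh\psi_k(\xi)\eta_k$. By the time-analyticity established in Section \ref{sec-analytic}, the equality \eqref{t4d} on the finite interval $(0,T_0)\times\Ga_\out$ propagates to all of $\BR_+\times\Ga_\out$, and the Laplace transform then gives $\pa_\nu\wh u^1(\xi)=\pa_\nu\wh u^2(\xi)$ on $\Ga_\out$ for every $\xi>0$. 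The staggered turn-on times of the $\psi_k$ make the scalars $\wh\psi_k(\xi)$ decouple by a peeling argument as $\xi\to\infty$, and $\rspan\{\eta_k\}$ is dense in $H^{3/2}(\pa\cM)$ with $\chi=1$ on $\Ga'_\rin$; arguing exactly as for Theorem \ref{t1} and Corollary \ref{t4}, I obtain the equality of the partial Dirichlet-to-Neumann maps
\[
\La_{c^1}(\xi)f=\La_{c^2}(\xi)f\ \text{ on }\Ga_\out,\qquad\supp f\subset\Ga'_\rin,\ \xi>0,
\]
where $\La_{c^j}(\xi)f:=\pa_\nu w^j|_{\Ga_\out}$ and $w^j$ solves $(-\tri_g+c^j+\xi^\al)w^j=0$ in $\cM$, $w^j=f$ on $\pa\cM$.

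Next I would pass to the hyperbolic equation. Setting $z=\xi^\al$ and continuing analytically in $z$ into the resolvent set of the Dirichlet operators $-\tri_g+c^j$, the identity above shows that the two families share the same boundary spectral data restricted to $\Ga'_\rin\times\Ga_\out$: the poles of $z\mapsto\La_{c^j}$ (the Dirichlet eigenvalues excited from $\Ga'_\rin$) and the corresponding residues (products of normal traces of eigenfunctions on $\Ga'_\rin$ and on $\Ga_\out$) must coincide. Through the standard correspondence between this boundary spectral data and the response operator of \eqref{eq_wave}, the two potentials produce identical Neumann traces on $\Ga_\out$ whenever \eqref{eq_wave} is driven by the same control supported in $\Ga'_\rin$. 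The exact controllability from $\Ga'_\rin$ guarantees that such controls generate a dense family of states, so a Blagoveshchenskii-type boundary-control argument becomes available for comparing $c^1$ and $c^2$ in the region of influence of $\Ga_\out$.

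Finally I would localize using strict convexity. Since the control data live on $\Ga'_\rin$, which is disjoint from $\Ga_\out$, the difference $w$ of the corresponding wave solutions has vanishing Cauchy data $w=\pa_\nu w=0$ on $(0,T)\times\Ga_\out$. The strict convexity of $\Ga_\out$ is precisely the pseudoconvexity condition under which Tataru's sharp unique continuation theorem propagates this vanishing across $\Ga_\out$ into a genuine two-sided neighborhood $U$ of $\Ga_\out$ in $\cM$; combined with the controllability-generated density of solutions, the equation then forces $(c^1-c^2)u=0$ on $U$ for a family of solutions $u$ dense enough that $c^1=c^2$ in $U$. The main obstacle will be this last transfer: rigorously converting the elliptic/fractional Dirichlet-to-Neumann equality, which is only accessible at the real spectral parameters $z=\xi^\al$, into genuine hyperbolic Cauchy data on $\Ga_\out$ under the \emph{disjoint} partial-data geometry, and carefully controlling the inhomogeneous term $(c^2-c^1)u$ so that Tataru's theorem yields a full neighborhood $U$ rather than mere propagation tangent to $\Ga_\out$.
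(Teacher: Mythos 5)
Your first two steps track the paper's own proof closely: reducing \eqref{t4d} to equality of the elliptic partial Dirichlet-to-Neumann maps for all $\xi>0$ (via the time-analyticity of Section \ref{sec-analytic} and the Laplace transform), then extracting from that family the boundary spectral data \eqref{t1j}--\eqref{t1k} (eigenvalues, multiplicities, and products of normal traces of eigenfunctions on $\Ga'_\rin\times\Ga_\out$), and from there deducing equality of the hyperbolic partial Dirichlet-to-Neumann maps $\cN^1=\cN^2$ for \eqref{eq_wave}, with Dirichlet input on $\Ga'_\rin$ and Neumann trace on $\Ga_\out$. This is exactly the route of the paper, mirroring Corollary \ref{t5}.

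The gap is in your final step. The difference $w=u^1-u^2$ of the wave solutions driven by a common input supported in $\Ga'_\rin$ does \emph{not} satisfy a homogeneous wave equation: it solves $(\pa_t^2-\tri_g+c^1)w=(c^2-c^1)u^2$, with a source spread over the whole region where $u^2$ is nonzero and containing precisely the unknown difference $c^1-c^2$. Tataru's unique continuation theorem therefore cannot be applied to $w$ to propagate the vanishing Cauchy data on $(0,T)\times\Ga_\out$ into an interior neighborhood, and any attempt to absorb the source using the very information one is trying to establish is circular; this circularity is the core difficulty of the disjoint-sets geometry, not a technical refinement, and you concede it yourself (``the main obstacle will be this last transfer''). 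The paper does not attack this difficulty directly: having established $\cN^1=\cN^2$, it invokes \cite[Theorem 1]{KKLO}, whose hypotheses --- exact controllability of \eqref{eq_wave} from $\Ga'_\rin$, strict convexity of $\Ga_\out$, and $\Ga_\rin\cap\Ga_\out=\emptyset$ --- are verbatim the assumptions of Corollary \ref{t6}. That theorem resolves the inhomogeneous-source circularity by a substantially more elaborate boundary control argument than the Blagoveshchenskii-plus-unique-continuation sketch you outline. To complete your proof, replace the last step by this citation (or reproduce the full argument of \cite{KKLO}, which is a serious undertaking well beyond the sketch given).
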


\subsection{Comments about our results}

To the best of our knowledge, Theorems \ref{t1} and \ref{t2} are the first results on the recovery of coefficients appearing in fractional diffusion equation in a general bounded domain $\Om\subset\BR^d$ with $d\ge2$ from a single boundary measurement. For existing literature using other types of observation data with spatial dimensions $d\ge2$, we refer to \cite{LIY,KOSY,KSY} for an infinite number of boundary measurements, \cite{HLYZ} for interior observation, and \cite{KY1} for a single measurement on a cylindrical domain. Moreover, Theorem \ref{t2} seems to be the first result of unique recovery of a convection term appearing in a fractional diffusion equation.

Our approach is based on a delicate choice \eqref{Phi} of the boundary input $\Phi$ inspired by \cite{AS,CY} and suitable time-analyticity properties of the solutions to \eqref{eq1} and \eqref{eqM1}. More precisely, the key ingredient in our approach comes from the results stated in Propositions \ref{l1} and \ref{l2} stating that, for $k\in\BN$ and $\ve_k\in(0,(t_{2k}-t_{2k-1})/2)$, the restriction of the solutions of \eqref{eq2} and \eqref{eq3} to $(t_{2k-1}+\ve_k,\infty)$ are analytic in time as functions taking values in $H^{2\ga}(\Om)$ for some $\ga\in(3/4,1]$. For $\al=1$, this can be easily deduced from a classical lifting arguments and the time-analyticity of solutions to parabolic equations with time independent coefficients and source terms compactly supported in time. However, for $\al\ne1$, due to the presence of the nonlocal operator $\pa_t^\al$, this approach becomes more difficult. Instead, we use a new representation of the solution to \eqref{eq2} which, combined with some decay properties of the Mittag-Leffler functions, allows us to prove an analytic extension in time of the solution to \eqref{eq2} into a conical neighborhood of $(t_{2k-1}+\ve_k,\infty)$. For \eqref{eq3}, we employ Proposition \ref{l1} to built a sequence of analytic functions on a conical neighborhood of $(t_{2k-1}+\ve_k,\infty)$ taking values in $H^{2\ga}(\Om)$, and then show its convergence to an extension of the solution to \eqref{eq3}. Once these results are proved,  we complete the proof of Theorems \ref{t1} and \ref{t2} by transforming our inverse problem into an inverse boundary value problem stated for a family of elliptic equations. Our approach not only simplifies and extends the result of \cite{AS,CY} to fractional diffusion equation, but it also improves the result in \cite{AS} even for $\al=1$ since Theorem \ref{t1} is stated with partial boundary measurements and for more general type of coefficients.

In contrast to \cite{AS,CY}, the result of Theorem \ref{t1} is not based on recovering the coefficients appearing in elliptic equations from the associated Dirichlet-to-Neumann map. Instead, we prove Theorem \ref{t1} by applying some inverse spectral results of \cite{CK2}. Namely, in a similar way to \cite{KOSY}, we prove the recovery of the boundary spectral data associated with the elliptic operator appearing in \eqref{eq1}. This approach allows us to consider a general class of coefficients as well as  the sub-boundaries $\Ga_\rin,\Ga_\out$ subjected only to the conditions \eqref{t1a}. In the framework of a smooth Riemannian manifold and smooth coefficients, we even prove in Corollary \ref{t4} that the condition on $\Ga_\rin$ and $\Ga_\out$ can be reduced to the weaker condition $\Ga_\rin=\Ga_\out$  for the recovery of coefficients as well as the Riemannian manifold up to an isometry. In Corollaries \ref{t5} and \ref{t6}, we further consider the recovery of a manifold and some local recovery of coefficients when $\Ga_\rin\cap\Ga_\out=\emptyset$. The results of Corollaries \ref{t4}, \ref{t5} and \ref{t6} are based on the strategy of Theorem \ref{t1}, combined with the result of \cite{KKL,KKLO,LO} based on the boundary control method initiated by the work \cite{B,BK}.

Next, we observe that the results of Theorem \ref{t1} and Corollaries \ref{t4}, \ref{t5} and \ref{t6} are similar to that in \cite{KOSY} with different type of measurements. More precisely, \cite{KOSY} use an infinite number of measurements, whereas in Theorem \ref{t1} and Corollaries \ref{t4}, \ref{t5} and \ref{t6} we state our results with a single measurement. However, the measurements in \cite{KOSY} are taken at one fix time, while the measurements in Theorem \ref{t1} and Corollaries \ref{t4}, \ref{t5} and \ref{t6} are taken in a time interval $(0,T_0)$. In such a sense, Theorem \ref{t1} and Corollaries \ref{t4}, \ref{t5} and \ref{t6} can be regarded as a supplementation to that in \cite{KOSY} because the amounts of data information are essentially the same.

Finally, let us remark that by applying \cite{CY1,CY2}, we can extend without any difficulty Theorem \ref{t2} to the case $d=2$. However, since the result in that context will require a different definition of the input $\Phi$, we do not consider it in the present paper.

\section{Time-analyticity of solutions}\label{sec-analytic}

In this section, we investigate the analyticity of the solutions to \eqref{eq1} and \eqref{eqM1} in time. Without loss of generality, we only deal with the Euclidean case \eqref{eq1} and basically assume \eqref{ell} for the coefficients $(\rho,a,c)$. Due to the technical difference, we first treat the non-advection case $\bm B=\bm0$ of \eqref{eq1} in Subsection \ref{sec-analytic-B0}, and then proceed to the general situation in Subsection \ref{sec-analytic-general}.

\subsection{The case of $B=0$}\label{sec-analytic-B0}

Let $k\in\BN$ and consider the initial boundary value problem
\begin{equation}\label{eq2}
\begin{cases}
\rho\,\pa_t^\al u_k-\rdiv(a\nb u_k)+c\,u_k=0 & \mbox{in }\BR_+\times\Om,\\
\begin{cases}
u_k=0 & \mbox{if }0<\al\le1,\\
u_k=\pa_t u_k=0 & \mbox{if }1<\al<2
\end{cases} & \mbox{in }\{0\}\times\Om,\\
u_k=b_k\,\chi\,\psi_k\,\eta_k & \mbox{on }\BR_+\times\pa\Om.
\end{cases}
\end{equation}
We fix also $\ve_k\in(0,(t_{2k}-t_{2k-1})/2)$, $\te\in(0,\pi\min(\f1\al-\f12,\f12))$ and we fix $D_{k,\te}=\{t_{2k-1}+\ve_k+r\,\e^{\ri\be}\mid\be\in(-\te,\te)\}$. Then, we consider the following intermediate result.

\begin{proposition}\label{l1}
The solution of \eqref{eq2} restricted to $(t_{2k-1}+\ve_k,\infty)\times\Om$ can be extended uniquely to a function $\wt u_k\in C^1(\ov{D_{k,\te}}\,;H^2(\Om))\cap C^\om(D_{k,\te};H^2(\Om))$.
\end{proposition}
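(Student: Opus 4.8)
The plan is to derive an explicit spectral representation of $u_k$, split off its stationary part, and then continue the remaining terms analytically into the sector $D_{k,\te}$ using decay estimates for Mittag-Leffler functions. Let $\{(\la_n,\vp_n)\}_{n\ge1}$ be the eigenpairs of the elliptic operator $\cA:=-\rdiv(a\nb\,\cdot\,)+c$ under the Dirichlet condition, normalized so that $\{\vp_n\}$ is an orthonormal basis of $L^2(\Om)$ weighted by $\rho$; by \eqref{ell} we have $\la_n>0$ and $\la_n\to\infty$. Writing the boundary datum as $\psi_k(t)\,h$ with $h:=b_k\,\chi\,\eta_k\in H^{3/2}(\pa\Om)$ and taking the Laplace transform as in Definition \ref{d1}, the transform $\wh{u_k}(\xi,\,\cdot\,)$ factors as $\wh\psi_k(\xi)\,V(\xi,\,\cdot\,)$, where $V(\xi)$ solves $\cA V+\xi^\al\rho V=0$ in $\Om$ with $V=h$ on $\pa\Om$. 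Testing this resolvent equation against $\vp_n$ and using Green's formula (the boundary term being $\langle h,a\pa_\nu\vp_n\rangle$) gives $V(\xi)=\sum_n\f{d_n}{\la_n+\xi^\al}\vp_n$ in $L^2(\Om)$, with $d_n:=-\langle h,a\pa_\nu\vp_n\rangle$. Inverting the Laplace transform term by term, via $\cL^{-1}[(\xi^\al+\la_n)^{-1}]=t^{\al-1}E_{\al,\al}(-\la_n t^\al)$ and convolution with $\psi_k$, I obtain the representation
\begin{equation*}
u_k(t)=\sum_{n=1}^\infty d_n\left(\int_0^t\psi_k(s)(t-s)^{\al-1}E_{\al,\al}(-\la_n(t-s)^\al)\,\rd s\right)\vp_n.
\end{equation*}

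Next I split the kernel. Using $\psi_k=p_k-\phi_k$ with $\supp\phi_k\subset[0,t_{2k-1}]$ and the identity $\int_0^t\tau^{\al-1}E_{\al,\al}(-\la_n\tau^\al)\,\rd\tau=\la_n^{-1}(1-E_\al(-\la_n t^\al))$, the representation becomes, for $t\ge t_{2k-1}$,
\begin{equation*}
u_k(t)=p_k\sum_n\f{d_n}{\la_n}\vp_n-p_k\sum_n\f{d_n}{\la_n}E_\al(-\la_n t^\al)\vp_n-\sum_n d_n\left(\int_0^{t_{2k-1}}\phi_k(s)(t-s)^{\al-1}E_{\al,\al}(-\la_n(t-s)^\al)\,\rd s\right)\vp_n.
\end{equation*}
The first sum is the stationary solution $p_k U$ (namely $\cA U=0$ in $\Om$, $U=h$ on $\pa\Om$), which lies in $H^2(\Om)$ by elliptic regularity and is independent of $t$. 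The proposed extension $\wt u_k(z)$ is obtained by replacing $t$ by $z\in D_{k,\te}$ in the last two sums.

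For the analyticity, note that for $z\in D_{k,\te}$ one has $\rRe z>0$ and $|\arg z|<\te$, and since the vertex $t_{2k-1}+\ve_k$ is real and $\te<\pi/2$, also $\rRe(z-s)\ge\ve_k$ and $|\arg(z-s)|<\te$ for every $s\in[0,t_{2k-1}]$. Hence $E_\al(-\la_n z^\al)$ and $(z-s)^{\al-1}E_{\al,\al}(-\la_n(z-s)^\al)$ are analytic in $z$ with no singularity (the kernel stays bounded because $|z-s|\ge\ve_k$), and integration over the fixed compact interval preserves analyticity. The crucial quantitative input is the decay of the Mittag-Leffler functions: since $\te<\pi(\f1\al-\f12)$, the argument obeys $|\arg(-\la_n z^\al)|\ge\pi-\al\te>\f{\al\pi}2$, which places it in the sector where $|E_{\al,\be}(\zeta)|\le C/(1+|\zeta|)$. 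This yields $|E_\al(-\la_n z^\al)|\le C/(1+\la_n|z|^\al)$ and an analogous bound for the $E_{\al,\al}$-factor, uniformly on $\ov{D_{k,\te}}$ (and on compacts for the $z$-derivative).

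The main obstacle is then to convert these pointwise bounds into convergence of the two series in $H^2(\Om)$, uniformly on $\ov{D_{k,\te}}$ and on compacts for $\pa_z\wt u_k$. Here it is essential that $\wt u_k-p_k U$ satisfies the homogeneous Dirichlet condition for $\rRe z\ge t_{2k-1}$, so that its $H^2$-norm is comparable to $\sum_n(1+\la_n^2)|\,\cdot\,|^2$ and the eigenfunction expansion may be manipulated term by term (for the full $u_k$, whose trace equals $p_k h\neq0$, such term-by-term operations are illegitimate since the $\vp_n$ have vanishing trace — whence the need to peel off $p_k U$ first). The delicate point is quantitative matching: one must show that the Mittag-Leffler decay $O((\la_n|z|^\al)^{-1})$ together with the summability of $\{d_n\}$ inherited from $h\in H^{3/2}(\pa\Om)$ via boundary and trace estimates defeats the growth $\|\vp_n\|_{H^2(\Om)}\lesssim\la_n$; this is precisely what forces the restriction $t>t_{2k-1}+\ve_k$ (keeping $|z-s|\ge\ve_k$ away from the kernel singularity) and the sharp opening angle $\te$. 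Uniqueness of the extension is automatic from the identity theorem, as $\wt u_k$ agrees with $u_k$ on the ray $(t_{2k-1}+\ve_k,\infty)$, and the uniform bounds up to $\pa D_{k,\te}$ deliver the claimed $C^1(\ov{D_{k,\te}};H^2(\Om))$ regularity.
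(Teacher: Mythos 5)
Your overall strategy mirrors the paper's proof: both expand $u_k$ in the Dirichlet eigenbasis of $\cA$, peel off the stationary elliptic lifting (your $p_kU$, the paper's $b_k\,p_k\,G_k$), and continue the remaining modes into the sector $D_{k,\te}$ using sectorial decay of Mittag-Leffler functions; your representation formula, the sector geometry ($\rRe(z-s)\ge\ve_k$ and $|\arg(-\la_n(z-s)^\al)|\ge\pi-\al\te>\al\pi/2$), and the uniqueness-by-identity-theorem step are all correct. The gap is that the step you explicitly defer --- showing the Mittag-Leffler decay ``defeats the growth $\|\vp_n\|_{H^2(\Om)}\lesssim\la_n$'' --- is the entire analytic content of the proposition, and with the bound you invoke, $|E_{\al,\be}(\zeta)|\le C/(1+|\zeta|)$, it actually fails for your third (history) term. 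There the $n$-th coefficient $c_n(z):=d_n\int_0^{t_{2k-1}}\phi_k(s)(z-s)^{\al-1}E_{\al,\al}(-\la_n(z-s)^\al)\,\rd s$ obeys only $|c_n(z)|\le C_K|d_n|/\la_n$, so convergence in $H^2(\Om)$ (equivalently in $\cD(\cA)$, these modes having zero trace) would require $\sum_n\la_n^2|c_n(z)|^2\lesssim\sum_n|d_n|^2<\infty$. But $d_n=\la_n(U,\rho\vp_n)$, and $\sum_n|d_n|^2\ge\la_1\sum_n\la_n|(U,\rho\vp_n)|^2=\infty$, since finiteness of the latter sum would force $U\in\cD(\cA^{1/2})=H^1_0(\Om)$, contradicting $U|_{\pa\Om}=h\ne0$. (Equivalently: $\|a\,\pa_\nu\vp_n\|_{L^2(\pa\Om)}$ grows like $\la_n^{1/2}$, so $\{d_n\}$ is not square-summable; the only summability you genuinely have is $\sum_n|d_n|^2/\la_n^2=\|U\|_{L^2(\Om;\rho\,\rd\bm x)}^2<\infty$.) Thus with your estimates the continued series cannot even be shown to lie in $H^2(\Om)$ at a single point $z$, let alone be $C^1$ up to $\pa D_{k,\te}$.

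You are exactly one power of $\la_n$ short, and this is precisely what the paper's proof is engineered to gain: \emph{before} estimating anything, it integrates by parts in time, using $\psi_k(0)=0$ and $\frac{\rd}{\rd t}E_{\al,1}(-\la t^\al)=-\la t^{\al-1}E_{\al,\al}(-\la t^\al)$ (\cite[Lemma 3.2]{SY}), so that the $E_{\al,\al}$-kernel convolved with $\psi_k$ is replaced by an $E_{\al,1}$-kernel convolved with $\psi_k'$ together with an explicit prefactor $\la_n^{-1}$; only then is the sectorial decay applied, giving coefficients of size $|d_n|/\la_n^2$ and hence uniform convergence in $\cD(\cA)\hookrightarrow H^2(\Om)$. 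You can repair your own decomposition in the same spirit: integrate $\int_0^{t_{2k-1}}\phi_k(s)(z-s)^{\al-1}E_{\al,\al}(-\la_n(z-s)^\al)\,\rd s$ by parts in $s$ (noting $\phi_k(t_{2k-1})=0$), or alternatively invoke the sharper sectorial asymptotics $|E_{\al,\al}(\zeta)|\le C/(1+|\zeta|^2)$, which hold because the leading term of the large-$\zeta$ expansion of $E_{\al,\al}$ carries the factor $1/\Ga(0)=0$. Either supplies the missing power of $\la_n$, but neither appears in your proposal. (Your second sum is not affected: the prefactor $d_n/\la_n$ together with the decay of $E_\al(-\la_n z^\al)$ already gives coefficients of size $|d_n|/\la_n^2$ there, although its term-by-term $z$-derivative runs into the same issue and is also cured by either fix.)
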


\begin{proof}
We denote by $\cA$ the operator defined by
\[
\cA f:=\f{-\rdiv(a\nb f)+c\,f}\rho,\quad f\in\cD(\cA)
\]
with domain $\cD(\cA)=\{f\in H^1_0(\Om)\mid\rdiv(a\nb f)\in L^2(\Om)\}$ acting on $L^2(\Om;\rho\,\rd\bm x)$. It is well known that such an operator is a self-adjoint operator with a spectrum consisting in the non-decreasing sequence of strictly positive eigenvalues $\{\la_\ell\}_{\ell\in\BN}$ and an associated orthonormal basis of eigenfunctions $\{\vp_\ell\}_{\ell\in\BN}$. Then, fixing $\ell\ge1$, taking the scalar product of \eqref{eq2} with $\vp_\ell$ and integrating by parts, we deduce that $u_{k,\ell}(t):=(u_k(t,\,\cdot\,),\rho\,\vp_\ell)$ solves the fractional ordinary differential equation
\[
\begin{cases}
\pa_t^\al u_{k,\ell}(t)+\la_\ell u_{k,\ell}(t)=-b_k\,\psi_k(t)\langle\chi\,\eta_k,a\,\pa_\nu\vp_\ell\rangle, & t>0,\\
\begin{cases}
u_{k,\ell}(0)=0 & \mbox{if }0<\al\le1,\\
u_{k,\ell}(0)=u'_{k,\ell}(0)=0 & \mbox{if }1<\al<2.
\end{cases}
\end{cases}
\]
It follows that
\[
u_{k,\ell}(t)=-b_k\langle\chi\,\eta_k,a\,\pa_\nu\vp_\ell\rangle\int_0^t(t-s)^{\al-1}E_{\al,\al}(-\la_\ell(t-s)^\al)\psi_k(s)\,\rd s.
\]
Using the fact that $\psi_k(0)=0$, integrating by parts and applying \cite[Lemma 3.2]{SY} yield
\[
u_{k,\ell}(t)=\f{b_k\langle\chi\,\eta_k,a\,\pa_\nu\vp_\ell\rangle}{\la_\ell}\left(-\psi_k(t)+\int_0^t E_{\al,1}(-\la_\ell(t-s)^\al)\psi_k'(s)\,\rd s\right).
\]
Setting
\begin{align}
v_k(t,\,\cdot\,) & :=-b_k\,\psi_k(t)\sum_{\ell=1}^\infty\f{\langle\chi\,\eta_k,a\,\pa_\nu\vp_\ell\rangle}{\la_\ell}\vp_\ell,\\%\label{eq-vk}\\
w_k(t,\,\cdot\,) & :=b_k\sum_{\ell=1}^\infty\f{\langle\chi\,\eta_k,a\,\pa_\nu\vp_\ell\rangle}{\la_\ell}\left(\int_0^t E_{\al,1}(-\la_\ell(t-s)^\al)\psi_k'(s)\,\rd s\right)\vp_\ell%\label{eq-wk}
\end{align}
for $t>0$, we see $u_k=v_k+w_k$. Therefore, it suffices to prove that the restriction of $v_k$, $w_k$ to $(t_{2k-1}+\ve_k,\infty)\times\Om$ can be extended to the functions $\wt v_k,\wt w_k\in C^1(\ov{D_{k,\te}}\,;H^2(\Om))\cap C^\om(D_{k,\te};H^2(\Om))$. For $v_k$, we claim that $v_k=b_k\,\psi_k\,G_k$, where $G_k$ solves
\[
\begin{cases}
-\rdiv(a\nb G_k)+c\,G_k=0 & \mbox{in }\Om,\\
G_k=\chi\,\eta_k & \mbox{on }\pa\Om.
\end{cases}
\]
To see this, it is enough to take the scalar product of $G_k$ with $\vp_\ell$ and integrate by parts to find
\begin{align*}
\la_\ell(G_k,\rho\,\vp_\ell) & =(G_k,-\rdiv(a\nb\vp_\ell)+c\,\vp_\ell)\\
& =-\langle\chi\,\eta_k,a\,\pa_\nu\vp_\ell\rangle+(-\rdiv(a\nb G_k)+c\,G_k,\vp_\ell)\\
& =-\langle\chi\,\eta_k,a\,\pa_\nu\vp_\ell\rangle.
\end{align*}
Thus, using the fact that $\Om$ is a $C^{1,1}$ domain and $\eta_k\in H^{3/2}(\pa\Om)$, we deduce $G_k\in H^2(\Om)$ and $\|G_k\|_{H^2(\Om)}\le C\|\eta_k\|_{H^{3/2}(\pa\Om)}$ (see e.g. \cite[Theorem 2.2.2.3]{Gr}). Moreover, from the definition of $\psi_k$, we deduce that
\[
v_k=b_k\,p_k\,G_k\quad\mbox{in }[t_{2k-1},\infty)\times\Om.
\]
This clearly proves that the restriction of $v_k$ to $(t_{2k-1}+\ve_k,\infty)\times\Om$ can be extended uniquely to $\wt v_k\in C^1(\ov{D_{k,\te}}\,;H^2(\Om))\cap C^\om(D_{k,\te};H^2(\Om))$.

Now let us consider $w_k$. Note first that thanks to \cite[Theorem 1.6]{P} as well as the facts that $\psi_k'=0$ on $(t_{2k-1},\infty)$ and
\begin{equation}\label{esti}
\sum_{\ell=1}^\infty\left|\f{\langle\chi\,\eta_k,a\,\pa_\nu\vp_\ell\rangle}{\la_\ell}\right|^2=\|G_k\|_{L^2(\Om)}^2<\infty,
\end{equation}
we have $w_k\in C([0,\infty);L^2(\Om))$ and
\[
w_k(t,\,\cdot\,):=b_k\sum_{\ell=1}^\infty\f{\langle\chi\,\eta_k,a\,\pa_\nu\vp_\ell\rangle}{\la_\ell}\left(\int_0^{t_{2k-1}}E_{\al,1}(-\la_\ell(t-s)^\al)\psi_k'(s)\,\rd s\right)\vp_\ell,\quad t\in(t_{2k-1},\infty).
\]
For any $\ell\ge1$, we introduce
\[
\begin{aligned}
& h_\ell(z):=\f{b_k\langle\chi\,\eta_k,a\,\pa_\nu\vp_\ell\rangle}{\la_\ell}\int_0^{t_{2k-1}}E_{\al,1}(-\la_\ell(z-s)^\al)\psi_k'(s)\,\rd s,\\
& w_{k,N}(z):=\sum_{\ell=1}^N h_\ell(z)\vp_\ell,
\end{aligned}
\quad z\in D_{k,\te}.
\]
Let $z_*\in\ov{D_{k,\te}}$ and $K$ be a compact neighborhood of $z_*$ with respect to the topology induced by $\ov{D_{k,\te}}\,$. It is clear that
\[
\rRe(z-s)\ge t_{2k-1}+\ve_k-t_{2k-1}=\ve_k,\quad z\in K,\ s\in[0,t_{2k-1}].
\]
Thus, for all $s\in[0,t_{2k-1}]$, the function $z\longmapsto(z-s)^\al$ is holomorphic in $K$ and we deduce that $w_{k,N}$ is an holomorphic function in $K$ taking values in $\cD(\cA)$. On the other hand, we have
\[
-(z-s)^\al\in\{r\,\e^{\ri\be}\mid r>\ve_k^\al,\ \be\in(\pi-\al\te,\pi+\al\te)\},\quad z\in K,\ s\in(0,t_{2k-1})
\]
and $\al\te\in(0,\pi-\f{\pi\al}2)$. Therefore, applying \cite[Theorem 1.6]{P}, we deduce that
\begin{align*}
|h_\ell(z)| & \le\f{|\langle\chi\,\eta_k,a\,\pa_\nu\vp_\ell\rangle|}{\la_\ell}\int_0^{t_{2k-1}}\f{C|\psi_k'(s)|}{1+\la_\ell|z-s|^\al}\,\rd s\le\f{|\langle\chi\,\eta_k,a\,\pa_\nu\vp_\ell\rangle|C\,t_{2k-1}\|\psi_k\|_{W^{1,\infty}(\BR)}}{\la_\ell(1+\ve_k^\al\la_\ell)}\\
& \le C_k\la_\ell^{-2}|\langle\chi\,\eta_k,a\,\pa_\nu\vp_\ell\rangle|,\quad z\in K,\ \ell\ge1,
\end{align*}
where $C_k$ is a constant independent of $z$ and $\ell$. Thus, applying \eqref{esti} yields
\[
\sup_{z\in K}\sum_{\ell=1}^\infty|\la_\ell h_\ell(z)|^2\le C_k\sum_{\ell=1}^\infty\left|\f{\langle\chi\,\eta_k,a\,\pa_\nu\vp_\ell\rangle}{\la_\ell}\right|^2<\infty.
\]
This proves that $w_{k,N}$ converge to the function
\[
\wt w_k(z)=\sum_{\ell=1}^\infty h_\ell(z)\vp_\ell
\]
uniformly with respect to $z\in K$ as a function taking values in $\cD(\cA)$. Using the fact $z_*$ is arbitrarily chosen in $\ov{D_{k,\te}}\,$, we deduce that $\wt w_k\in C^1(\ov{D_{k,\te}}\,;\cD(\cA))\cap C^\om(D_{k,\te};\cD(\cA))$. On the other hand, since $\Om$ is $C^{1,1}$, by the regularity of the operator $\cA$ (see \cite[Theorem 2.2.2.3]{Gr}), we deduce that $\cD(\cA)$ is embedded continuously into $H^2(\Om)$ and $\wt w_k\in C^1(\ov{D_{k,\te}}\,;H^2(\Om))\cap C^\om(D_{k,\te};H^2(\Om))$. Combining this with the fact that
\[
w_k(t)=\wt w_k(t),\quad t\in(t_{2k-1}+\ve_k,\infty),
\]
we deduce that $\wt u_k=\wt v_k+\wt w_k\in C^1(\ov{D_{k,\te}}\,;H^2(\Om))\cap C^\om(D_{k,\te};H^2(\Om))$ is the unique holomorphic extension of $u_k$ restricted to $(t_{2k-1}+\ve_k,\infty)$.
\end{proof}

\subsection{The case of $B\ne0$}\label{sec-analytic-general}

In this subsection, we restrict $a=1$, $c=0$ in \eqref{eq1} and assume $(\rho,\bm B)\in(L^\infty(\Om))^{d+1}$ with $\rho$ satisfying \eqref{ell}. Parallel to the previous subsection, let $k\in\BN$ and consider the initial-boundary value problem
\begin{equation}\label{eq3}
\begin{cases}
\rho\,\pa_t^\al v_k-\tri v_k=-\bm B\cdot\nb v_k & \mbox{in }\BR_+\times\Om,\\
\begin{cases}
v_k=0 & \mbox{if }0<\al\le1,\\
v_k=\pa_t v_k=0 & \mbox{if }1<\al<2
\end{cases} & \mbox{in }\{0\}\times\Om,\\
v_k=b_k\,\chi\,\psi_k\,\eta_k & \mbox{on }\BR_+\times\pa\Om.
\end{cases}
\end{equation}
Regarding the advection term as a new source, we see that the equation \eqref{eq3} admits a unique solution $v_k\in C([0,\infty);H^{2\ga}(\Om))$ ($s\in(3/4,1)$) taking the form of
\[
v_k(t,\,\cdot\,)=u_k(t,\,\cdot\,)+\sum_{\ell=1}^\infty\int_0^t(t-s)^{\al-1}E_{\al,\al}(-\la_\ell(t-s)^\al)(\bm B\cdot\nb v_k(s,\,\cdot\,),\vp_\ell)\,\rd s\,\vp_\ell,
\]
where $u_k$ is the solution of \eqref{eq2}. Moreover, for all $T>0$, one can prove the following estimate
\[
\|v_k(t,\,\cdot\,)\|_{H^{2\ga}(\Om)}\le C\|\eta_k\|_{H^{3/2}(\pa\Om)},\quad0<t\le T.
\]

We will show that the restriction of $v_k$ to $(t_{2k-1}+\ve_k,\infty)\times\Om$ admits an holomorphic extension as a function taking its value in $H^{2\ga}(\Om)$.

\begin{proposition}\label{l2}
Let $s\in(3/4,1)$. Then the solution of \eqref{eq3} restricted to $(t_{2k-1}+\ve_k,\infty)\times\Om$ can be extended to $\wt v_k\in C^1(\ov{D_{k,\te}}\,;H^{2\ga}(\Om))\cap C^\om(D_{k,\te};H^{2\ga}(\Om))$.
\end{proposition}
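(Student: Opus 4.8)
The plan is to bootstrap the analyticity of $\wt v_k$ from that of $\wt u_k$ (Proposition \ref{l1}) via the integral representation of $v_k$, treating the advection term $-\bm B\cdot\nb v_k$ as a source and solving the resulting fixed-point equation \emph{in the complex domain} $D_{k,\te}$. Concretely, introduce the solution operator of the non-advection problem: for a source $f$, write
\[
(\cK f)(z):=\sum_{\ell=1}^\infty\left(\int_0^z(z-s)^{\al-1}E_{\al,\al}(-\la_\ell(z-s)^\al)(f(s,\,\cdot\,),\vp_\ell)\,\rd s\right)\vp_\ell,
\]
so that on the real axis $v_k=u_k+\cK(-\bm B\cdot\nb v_k)$. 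The goal is to promote this real identity to an identity between holomorphic $H^{2\ga}(\Om)$-valued functions on $D_{k,\te}$.

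First I would set up the holomorphic analogue of the problem. Define the sequence of Picard iterates $v_k^{(0)}:=\wt u_k$ and $v_k^{(n+1)}:=\wt u_k-\cK(\bm B\cdot\nb v_k^{(n)})$, where $\wt u_k$ is the holomorphic extension from Proposition \ref{l1}, and interpret the convolution integral defining $\cK$ along a path in $D_{k,\te}$ from $t_{2k-1}+\ve_k$ to $z$. The key technical point is that for $z\in D_{k,\te}$ and $s$ on such a path, one has $-\la_\ell(z-s)^\al$ lying in a sector strictly contained in $\{|\arg\zeta-\pi|<\pi-\tfrac{\pi\al}2\}$, so that the Mittag-Leffler decay estimate $|E_{\al,\al}(-\la_\ell(z-s)^\al)|\le C/(1+\la_\ell|z-s|^\al)$ from \cite[Theorem 1.6]{P} applies uniformly; the admissible opening $\te<\pi\min(\tfrac1\al-\tfrac12,\tfrac12)$ is precisely what guarantees $\al\te<\pi-\tfrac{\pi\al}2$. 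Using this estimate together with the elliptic smoothing of $\cK$ (each application gains two spatial derivatives, hence maps $H^{2\ga-1}(\Om)$ into $H^{2\ga+1}(\Om)\hookrightarrow H^{2\ga}(\Om)$), and the fact that $\bm B\cdot\nb$ loses one derivative, I would show that each $v_k^{(n)}$ is holomorphic on $D_{k,\te}$ with values in $H^{2\ga}(\Om)$, continuous up to $\ov{D_{k,\te}}$.

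Next I would prove convergence of the iterates. On a bounded subregion $D_{k,\te}\cap\{|z|\le R\}$, the composition $\cK\circ(\bm B\cdot\nb)$ is a contraction in a weighted norm: the $(t-s)^{\al-1}$ Abel-kernel factor together with the Mittag-Leffler bound yields, after the standard fractional-power iteration estimate, a factorial-type decay $\||v_k^{(n+1)}-v_k^{(n)}\||\le \frac{(CR^\al)^n}{\Gamma(n\al+1)}\||\cdots\||$, so the series $\sum_n (v_k^{(n+1)}-v_k^{(n)})$ converges uniformly on compact subsets of $\ov{D_{k,\te}}$ to a limit $\wt v_k$. Uniform-on-compacta limits of holomorphic $H^{2\ga}(\Om)$-valued functions are holomorphic, giving $\wt v_k\in C^\om(D_{k,\te};H^{2\ga}(\Om))\cap C(\ov{D_{k,\te}};H^{2\ga}(\Om))$; the $C^1$ regularity up to the boundary follows from differentiating the integral identity and re-running the same estimates. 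Finally, restricting to the real axis, $\wt v_k$ satisfies the same Volterra equation as $v_k$, and uniqueness of solutions to that equation (already used to define $v_k$) forces $\wt v_k=v_k$ on $(t_{2k-1}+\ve_k,\infty)$, so $\wt v_k$ is genuinely the desired extension.

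The main obstacle I anticipate is controlling the interplay between the derivative loss from $\bm B\cdot\nb$ and the smoothing of $\cK$ \emph{uniformly in the complex variable}: on the real line one has the clean $C([0,\infty);H^{2\ga}(\Om))$ bound quoted before the statement, but off the real axis the Abel kernel $(z-s)^{\al-1}$ and the deformation of the integration contour must be handled so that the gain of regularity survives and the contraction constant stays controlled as $|z|$ grows. Pinning down the correct functional-analytic setting for $\cK$ as a bounded (indeed smoothing) operator between the relevant fractional Sobolev scales along complex paths, and verifying that the choice $\ga\in(3/4,1)$ makes $H^{2\ga-1}(\Om)$ an algebra-compatible space on which $\bm B\cdot\nb$ and multiplication by $\bm B\in(C^\ka(\ov\Om))^d$ behave well, is where the real work lies.
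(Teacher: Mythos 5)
Your overall architecture (Picard iteration in the complex cone $D_{k,\te}$, Mittag-Leffler sector bounds from \cite[Theorem 1.6]{P}, uniform-on-compacta convergence of holomorphic iterates, identification via Volterra uniqueness) is the same as the paper's, but your fixed-point equation is not the right one: it omits the memory term forced by the nonlocality of $\pa_t^\al$, and this breaks the final identification step. You define $\cK$ with integration from $0$, yet run the iteration ``along a path in $D_{k,\te}$ from $t_{2k-1}+\ve_k$ to $z$'', i.e.\ you silently drop the contribution of the history interval $[0,t_{2k-1}+\ve_k]$. The restriction of $v_k$ to $(t_{2k-1}+\ve_k,\infty)$ does \emph{not} satisfy the autonomous equation $w=\wt u_k-\cK(\bm B\cdot\nb w)$ with the convolution started at $t_{2k-1}+\ve_k$; it satisfies that equation with the additional inhomogeneous term
\[
\wt y_k(z,\,\cdot\,)=\sum_{\ell=1}^\infty\left(\int_0^{t_{2k-1}+\ve_k}(z-\ze)^{\al-1}E_{\al,\al}(-\la_\ell(z-\ze)^\al)(\bm B\cdot\nb v_k(\ze,\,\cdot\,),\vp_\ell)\,\rd\ze\right)\vp_\ell,
\]
which is nonzero in general since $\bm B\cdot\nb v_k\not\equiv0$ on $(0,t_{2k-1}+\ve_k)$. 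Consequently, restricted to the real axis your limit solves a \emph{different} Volterra equation from the one $v_k$ solves, and the uniqueness argument in your last step identifies your $\wt v_k$ with the wrong function, not with $v_k$. If instead you insist on integrating from $0$ so that the real-axis identity is the correct one, your iterates are ill-defined: $v_k^{(0)}=\wt u_k$ does not exist on $[0,t_{2k-1}+\ve_k]$, and substituting the true $v_k$ there does not come for free, because $v_k$ is not analytic on that interval (the input $\psi_k$ is merely $C^\infty$), so the holomorphy in $z$ of this history integral is a separate claim that must be proved.

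That separate claim is exactly what the paper supplies and your proposal is missing: the paper's iteration is $v_k^n=\wt u_k+\wt y_k+\int_{t_{2k-1}+\ve_k}^z(\cdots)$, and Step 1 of its proof is devoted to showing $\wt y_k\in C(\ov{D_{k,\te}}\,;H^{2\ga}(\Om))\cap C^\om(D_{k,\te};H^{2\ga}(\Om))$, which is delicate because the kernel $(z-\ze)^{\al-1}E_{\al,\al}(-\la_\ell(z-\ze)^\al)$ degenerates as $\ze\uparrow t_{2k-1}+\ve_k$ with $z$ near the cone vertex; this is handled by truncating the integral at $t_{2k-1}+\ve_k-\de$ and using the integrability of $\ze^{\al(1-\ga)-1}$ to pass to the limit uniformly on compacta. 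Two secondary inaccuracies in your bookkeeping would also surface once this is fixed: $\cK$ does not gain two full spatial derivatives (that would correspond to a non-integrable kernel singularity $\ze^{-1}$); it gains $2\ga<2$ derivatives, mapping $L^2$-valued sources into $\cD(\cA^\ga)\hookrightarrow H^{2\ga}(\Om)$, which is precisely why $\ga<1$ is required. Accordingly, the contraction estimate carries $\Ga(\al n(1-\ga)+1)$ in the denominator with increment $\al(1-\ga)$ per iteration (the paper's Lemma \ref{l3}), not $\Ga(\al n+1)$.
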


\begin{proof}
Let us fix $v_k^0=0$ and, for $n\in\BN$ and $z\in\ov{D'_{k,\te}}\,$, define
\begin{align}
v_k^n(z,\,\cdot\,) & :=\wt u_k(z,\,\cdot\,)+\wt y_k(z,\,\cdot\,)\nonumber\\
& \quad\,\:+\sum_{\ell=1}^\infty\left(\int_0^{z-t_{2k-1}-\ve_k}\ze^{\al-1}E_{\al,\al}(-\la_\ell\ze^\al)\left(\bm B\cdot\nb v_k^{n-1}(z-\ze,\,\cdot\,),\vp_\ell\right)\rd\ze\right)\vp_\ell,\label{vkn}
\end{align}
where
\[
\wt y_k(z,\,\cdot\,)=\sum_{\ell=1}^\infty\left(\int_0^{t_{2k-1}+\ve_k}(z-\ze)^{\al-1}E_{\al,\al}(-\la_\ell(z-\ze)^\al)(\bm B\cdot\nb v_k(\ze,\,\cdot\,),\vp_\ell)\,\rd\ze\right)\vp_\ell.
\]
We divide the proof into three steps. We start by proving that $v_k^n\in C(\ov{D_{k,\te}}\,;H^{2\ga}(\Om))\cap C^\om(D_{k,\te};H^{2\ga}(\Om))$ for all $n\in\BN$. Since we have $\wt u_k\in C^1(\ov{D_{k,\te}}\,;H^{2\ga}(\Om))\cap C^\om(D_{k,\te};H^{2\ga}(\Om))$ in view of Proposition \ref{l1}, then it suffices to show that $\wt y_k\in C(\ov{D_{k,\te}}\,;H^{2\ga}(\Om))\cap C^\om(D_{k,\te};H^{2\ga}(\Om))$. Once this is proved, we can complete the proof by showing that $\{v_k^n\}_{n\in\BN}$ converges uniformly to $\wt v_k$ on any compact subset of $\ov{D_{k,\te}}$, which coincides on $(t_{2k-1}+\ve_k,\infty)$ with $v_k$.\medskip

{\bf Step 1}\ \ We prove that $\wt y_k\in C(\ov{D_{k,\te}}\,;H^{2\ga}(\Om))\cap C^\om(D_{k,\te};H^{2\ga}(\Om))$. For this purpose, we fix $\de\in(0,t_{2k-1}+\ve_k)$ and consider
\[
\wt y_{k,\de}(z,\,\cdot\,)=\sum_{\ell=1}^\infty\left(\int_0^{t_{2k-1}+\ve_k-\de}(z-\ze)^{\al-1}E_{\al,\al}(-\la_\ell(z-\ze)^\al)(\bm B\cdot\nb v_k(s,\,\cdot\,),\vp_\ell)\,\rd\ze\right)\vp_\ell,\quad z\in\ov{D_{k,\te}}\,.
\]
Repeating the arguments used in the proof of Proposition \ref{l1}, one can check that $\wt y_{k,\de}\in C^1(\ov{D_{k,\te}}\,;H^{2\ga}(\Om))\cap C^\om(D_{k,\te};H^{2\ga}(\Om))$. Moreover, for any compact set $K\subset\ov{D_{k,\te}}\,$, applying \cite[Theorem 1.6]{P}, we obtain
\begin{align*}
\|(\wt y_k-\wt y_{k,\de})(z,\,\cdot\,)\|_{H^{2\ga}(\Om)} & \le C\|(\wt y_k-\wt y_{k,\de})(z,\,\cdot\,)\|_{\cD(\cA^\ga)}\\
& \le C\|v_k\|_{L^\infty(0,T;H^1(\Om))}\left(\int_{t_{2k-1}+\ve_k-\de}^{t_{2k-1}+\ve_k}\ze^{\al(1-\ga)-1}\,\rd\ze\right)\\
& \le C\left\{(t_{2k-1}+\ve_k)^{\al(1-\ga)}-(t_{2k-1}+\ve_k-\de)^{\al(1-\ga)}\right\},\quad z\in K,
\end{align*}
where $C>0$ is a constant independent of $z$. This proves that $\wt y_{k,\de}$ converges to $\wt y_k$ uniformly for $z\in K$ in $H^{2\ga}(\Om)$ as $\de\to0$. From this result, we deduce that $\wt y_k\in C^1(\ov{D_{k,\te}}\,;H^{2\ga}(\Om))\cap C^\om(D_{k,\te};H^{2\ga}(\Om))$.

{\bf Step 2}\ \ We show by induction that $v_k^n\in C^1(\ov{D_{k,\te}}\,;H^{2\ga}(\Om))\cap C^\om(D_{k,\te};H^{2\ga}(\Om))$ for all $n\in\BN$.

It is clear that this property is true for $n=0$. Now suppose that for some $n\in\BN$, there holds $v_k^j\in C^1(\ov{D_{k,\te}}\,;H^{2\ga}(\Om))\cap C^\om(D_{k,\te};H^{2\ga}(\Om))$ for $j=0,\ldots,n-1$. Consider $v_k^n$ defined on $z\in\ov{D_{k,\te}}$ by \eqref{vkn} and
\[
w_k^n(z):=\sum_{\ell=1}^\infty\left(\int_0^{z-t_{2k-1}-\ve_k}\ze^{\al-1}E_{\al,\al}(-\la_\ell\ze^\al)\left(\bm B\cdot\nb v_k^{n-1}(z-\ze,\,\cdot\,),\vp_\ell\right)\rd\ze\right)\vp_\ell.
\]
According to the above discussion, one can complete the proof by showing that $w_k^n\in C^1(\ov{D_{k,\te}}\,;H^{2\ga}(\Om))\cap C^\om(D_{k,\te};H^{2\ga}(\Om))$. To this end, let us consider
\[
r_\ell^n(z):=\int_0^{z-t_{2k-1}-\ve_k}\ze^{\al-1}E_{\al,\al}(-\la_\ell\ze^\al)\left(\bm B\cdot\nb v_k^{n-1}(z-\ze,\,\cdot\,),\vp_\ell\right)\rd\ze,\quad\ell\in\BN.
\]
Since $v_k^{n-1}\in C^1(\ov{D_{k,\te}}\,;H^{2\ga}(\Om))$, one can easily check that $r_\ell^n\in C^1(\ov{D_{k,\te}})$. Moreover, for any $z\in D_{k,\te}$ and $\tau\in\BC$ such that $|\tau|$ is sufficiently small, we have
\[%begin{equation}\label{l2e}
\f{r_\ell^n(z+\tau)-r_\ell^n(z)}\tau=:\I+\I\I,
\]%end{equation}
where
\begin{align*}
\I & :=\f1\tau\int_0^{z+\tau-t_{2k-1}-\ve_k}\ze^{\al-1}E_{\al,\al}(-\la_\ell\ze^\al)\left(\bm B\cdot\nb v_k^{n-1}(z-\ze,\,\cdot\,),\vp_\ell\right)\rd\ze\\
& \quad\,\:-\f1\tau\int_0^{z-t_{2k-1}-\ve_k}\ze^{\al-1}E_{\al,\al}(-\la_\ell\ze^\al)\left(\bm B\cdot\nb v_k^{n-1}(z-\ze,\,\cdot\,),\vp_\ell\right)\rd\ze\\
\I\I & :=\int_0^{z+\tau-t_{2k-1}-\ve_k}\ze^{\al-1}E_{\al,\al}(-\la_\ell\ze^\al)\left(\bm B\cdot\nb\left(\f{v_k^{n-1}(z+\tau-\ze,\,\cdot\,)-v_k^{n-1}(z-\ze,\,\cdot\,)}\tau\right),\vp_\ell\right)\rd\ze.
\end{align*}
Fix $\de\in(0,\rRe\,z-t_{2k-1}-\ve_k)$. Since $v_k^{n-1}\in C^\om(D_{k,\te};H^{2\ga}(\Om))$, one can find a neighborhood $\cO$ of $[\de,z-t_{2k-1}-\ve_k]\cup[\de,z+\tau-t_{2k-1}-\ve_k]$ such that
\[
\ze\longmapsto\ze^{\al-1}E_{\al,\al}(-\la_\ell\ze^\al)(\bm B\cdot\nb v_k^{n-1}(z-\ze,\,\cdot\,),\vp_\ell)\in C^\om(\cO;\BC).
\]
Therefore, we have
\begin{align*}
& \quad\,\int_\de^{z+\tau-t_{2k-1}-\ve_k}\ze^{\al-1}E_{\al,\al}(-\la_\ell\ze^\al)\left(\bm B\cdot\nb v_k^{n-1}(z-\ze,\,\cdot\,),\vp_\ell\right)\rd\ze\\
& \quad\,-\int_\de^{z-t_{2k-1}-\ve_k}\ze^{\al-1}E_{\al,\al}(-\la_\ell\ze^\al)\left(\bm B\cdot\nb v_k^{n-1}(z-\ze,\,\cdot\,),\vp_\ell\right)\rd\ze\\
& =\int_{z-t_{2k-1}-\ve_k}^{z+\tau-t_{2k-1}-\ve_k}\ze^{\al-1}E_{\al,\al}(-\la_\ell\ze^\al)\left(\bm B\cdot\nb v_k^{n-1}(z-\ze,\,\cdot\,),\vp_\ell\right)\rd\ze.
\end{align*}
Using the fact that $v_k^{n-1}\in C^1(\ov{D_{k,\te}}\,;H^{2\ga}(\Om))$, we can send $\de\to0$ and deduce
\begin{equation}\label{l2f}
\I=\f1\tau\int_{z-t_{2k-1}-\ve_k}^{z+\tau-t_{2k-1}-\ve_k}\ze^{\al-1}E_{\al,\al}(-\la_\ell\ze^\al)\left(\bm B\cdot\nb v_k^{n-1}(z-\ze,\,\cdot\,),\vp_\ell\right)\rd\ze.
\end{equation}
Combining this with the fact that $v_k^{n-1}\in C(\ov{D_{k,\te}}\,;H^{2\ga}(\Om))$, we deduce that
\[
\lim_{\tau\to0}\I=z^{\al-1}E_{\al,\al}(-\la_\ell z^\al)\left(\bm B\cdot\nb v_k^{n-1}(t_{2k-1}+\ve_k,\,\cdot\,),\vp_\ell\right).
\]
In the same way, for $\de>0$ small enough, using the fact that $v_k^{n-1}\in C^\om(D_{k,\te};H^{2\ga}(\Om))$, we have
\begin{align*}
& \quad\,\lim_{\tau\to0}\int_\de^{z+\tau-t_{2k-1}-\ve_k}\ze^{\al-1}E_{\al,\al}(-\la_\ell\ze^\al)\left(\bm B\cdot\nb\left(\f{v_k^{n-1}(z+\tau-\ze,\,\cdot\,)-v_k^{n-1}(z-\ze,\,\cdot\,)}\tau\right),\vp_\ell\right)\rd\ze\\
& =\int_\de^{z-t_{2k-1}-\ve_k}\ze^{\al-1}E_{\al,\al}(-\la_\ell\ze^\al)\left(\bm B\cdot\nb\pa_z v_k^{n-1}(z-\ze,\,\cdot\,),\vp_\ell\right)\rd\ze.
\end{align*}
Combining this with the fact that $v_k^{n-1}\in C^1(\ov{D_{k,\te}}\,;H^{2\ga}(\Om))$ and sending $\de\to0$, we obtain
\[
\lim_{\tau\to0}\I\I=\int_0^{z-t_{2k-1}-\ve_k}\ze^{\al-1}E_{\al,\al}(-\la_\ell\ze^\al)\left(\bm B\cdot\nb\pa_z v_k^{n-1}(z-\ze,\,\cdot\,),\vp_\ell\right)\rd\ze.
\]
Combining this with \eqref{l2f}, we deduce that $r_\ell^n\in C^1(\ov{D_{k,\te}})\cap C^\om(D_{k,\te};\BC)$. In addition, for any $m_1,m_2\in\BN$ satisfying $m_1<m_2$ and $K$ a compact subset of $D_{k,\te}$, applying \cite[Theorem 1.6]{P}, we obtain
\begin{align*}
\left\|\sum_{\ell=m_1}^{m_2}r_\ell^n(z)\vp_\ell\right\|_{H^{2\ga}(\Om)}^2 & \le C\left\|\sum_{\ell=m_1}^{m_2}r_\ell^n(z)\vp_\ell\right\|_{\cD(\cA^\ga)}^2\\
& \le C\sup_{z\in K_1}\sum_{\ell=m_1}^{m_2}\left|\left\langle\bm B\cdot\nb v_k^{n-1}(z,\,\cdot\,),\vp_\ell\right\rangle\right|^2\left(\int_0^1s^{\al(1-\ga)-1}\,\rd s\right)^2\\
& \le C\sup_{z\in K_1}\sum_{\ell=m_1}^{m_2}\left|\left\langle\bm B\cdot\nb v_k^{n-1}(z,\,\cdot\,),\vp_\ell\right\rangle\right|^2,
\end{align*}
where
\[
K_1:=\{z-\ze\mid z\in K,\ \ze\in[0,z-t_{2k-1}-\ve_k]\}
\]
is a compact subset of $\ov{D_{k,\te}}\,$. Combining this with the fact that $v_k^{n-1}\in C^1(\ov{D_{k,\te}}\,;H^{2\ga}(\Om))$, we deduce that the series $\sum_{\ell=1}^\infty r_\ell^n(z)\vp_\ell$ converges uniformly with respect to $z\in K$ to $w_k^n$ as functions taking values in $H^{2\ga}(\Om)$. This proves that $w_k^n\in C^1(\ov{D_{k,\te}}\,;H^{2\ga}(\Om))\cap C^\om(D_{k,\te};H^{2\ga}(\Om))$ and by the same way that $v_k^n\in C^1(\ov{D_{k,\te}}\,;H^{2\ga}(\Om))\cap C^\om(D_{k,\te};H^{2\ga}(\Om))$. By induction, it follows that this property holds true for all $n\in\BN$.\medskip

{\bf Step 3}\ \ Now we complete the proof by showing that the sequence $\{v_k^n\}_{n\in\BN}$ converges uniformly on any compact set $K\subset\ov{D_{k,\te}}$ to $\wt v_k\in C^1(\ov{D_{k,\te}}\,;H^{2\ga}(\Om))\cap C^\om(D_{k,\te};H^{2\ga}(\Om))$, which coincides with $v_k$ restricted to $(t_{2k-1}+\ve_k,\infty)$. To see this, let us first remark that $v_k^n$ can be rewritten as
\begin{align*}
v_k^n(z,\,\cdot\,) & =\wt u_k(z,\,\cdot\,)+\wt y_k(z,\,\cdot\,)\\
& \quad\,+\sum_{\ell=1}^\infty\left(\int_{t_{2k-1}+\ve_k}^z(z-\ze)^{\al-1}E_{\al,\al}(-\la_\ell(z-\ze)^\al)\left(\bm B\cdot\nb v_k^{n-1}(\ze,\,\cdot\,),\vp_\ell\right)\rd\ze\right)\vp_\ell,\quad z\in\ov{D_{k,\te}}\,.
\end{align*}
Therefore, combining \cite[Theorem 1.6]{P} with the Lebesgue dominate convergence theorem, we see that
\begin{align*}
(v_k^{n+1}-v_k^n)(z,\,\cdot\,)=\int_{t_{2k-1}+\ve_k}^z(z-\ze)^{\al-1}\left(\sum_{\ell=1}^\infty E_{\al,\al}(-\la_\ell(z-\ze)^\al)\left(\bm B\cdot\nb\left(v_k^n-v_k^{n-1}\right)(\ze,\,\cdot\,),\vp_\ell\right)\vp_\ell\right)\rd\ze.
\end{align*}
Using this identity, we can obtain the following estimate of $\|v_k^{n+1}-v_k^n\|_{\cD(\cA^\ga)}$ by an inductive argument.

\begin{lemma}\label{l3}
For any $n\in\BN$ and any compact subset $K$ of $\ov{D_{k,\te}}\,$, we have
\begin{equation}\label{iteration-esti}
\left\|\left(v_k^{n+1}-v_k^n\right)(z,\,\cdot\,)\right\|_{H^{2\ga}(\Om)}\le C_K\f{C^n\|\bm B\|_{(L^\infty(\Om))^d}^n|z-t_{2k-1}-\ve_k|^{\al n(1-\ga)}}{\Ga(\al n(1-\ga)+1)},\quad z\in K.
\end{equation}
\end{lemma}

For the sake of consistency, we provide the proof of Lemma \ref{l3} after finishing the proof of Proposition \ref{l2}. In view of Lemma \ref{l3}, it follows that the sequence $\{v_k^n\}_{n=0}^\infty$ converges uniformly on any compact set $K\subset\ov{D_{k,\te}}$ to $\wt v_k\in C^1(\ov{D_{k,\te}}\,;H^{2\ga}(\Om))\cap C^\om(D_{k,\te};H^{2\ga}(\Om))$. Moreover, in light of \eqref{vkn}, we have
\[
\wt v_k(z,\,\cdot\,)=\wt u_k(z,\,\cdot\,)+\wt y_k(z,\,\cdot\,)+\sum_{\ell=1}^\infty\left(\int_{t_{2k-1}+\ve_k}^z(z-\ze)^{\al-1}E_{\al,\al}(-\la_\ell(z-\ze)^\al)(\bm B\cdot\nb\wt v_k(\ze,\,\cdot\,),\vp_\ell)\,\rd\ze\right)\vp_\ell
\]
for $z\in\ov{D_{k,\te}}\,$. In particular, we find
\[
\wt v_k(t,\,\cdot\,)=u_k(t,\,\cdot\,)+\wt y_k(t,\,\cdot\,)+\sum_{\ell=1}^\infty\left(\int_{t_{2k-1}+\ve_k}^t(t-s)^{\al-1}E_{\al,\al}(-\la_\ell(t-s)^\al)(\bm B\cdot\nb\wt v_k(s,\,\cdot\,),\vp_\ell)\,\rd s\right)\vp_\ell
\]
for $t>t_{2k-1}+\ve_k$. Combining this with the fact that
\[
v_k(t,\,\cdot\,)=u_k(t,\,\cdot\,)+\wt y_k(t,\,\cdot\,)+\sum_{\ell=1}^\infty\left(\int_{t_{2k-1}+\ve_k}^t(t-s)^{\al-1}E_{\al,\al}(-\la_\ell(t-s)^\al)(\bm B\cdot\nb v_k(s,\,\cdot\,),\vp_\ell)\,\rd s\right)\vp_\ell
\]
for $t>t_{2k-1}+\ve_k$ and the uniqueness of the integral equation
\[
w(t,\,\cdot\,)=u_k(t,\,\cdot\,)+\wt y_k(t,\,\cdot\,)+\sum_{\ell=1}^\infty\left(\int_{t_{2k-1}+\ve_k}^t(t-s)^{\al-1}E_{\al,\al}(-\la_\ell(t-s)^\al)(\bm B\cdot\nb w(s,\,\cdot\,),\vp_\ell)\,\rd s\right)\vp_\ell
\]
for $t>t_{2k-1}+\ve_k$, which can be deduced from arguments similar to \cite[Proposition 1]{FK}, we conclude that
\[
\wt v_k(t,\,\cdot\,)=v_k(t,\,\cdot\,),\quad t>t_{2k-1}+\ve_k.
\]
This completes the proof of Proposition \ref{l2}.
\end{proof}

Now we provide the proof of Lemma \ref{l3}.

\begin{proof}[Proof of Lemma \ref{l3}]
Let $z\in D_{k,\te}$. Using the fact that $z-t_{2k-1}-\ve_k=|z-t_{2k-1}-\ve_k|\,\e^{\ri\be}:=r_k\e^{\ri\be}$ with $\be\in(-\te,\te)$, we find
\begin{align*}
& \quad\,\left\|\left(v_k^{n+1}-v_k^n\right)(z,\,\cdot\,)\right\|_{\cD(\cA^\ga)}\\
& \le\left|\int_{t_{2k-1}+\ve_k}^z|z-\ze|^{\al-1}\left(\sum_{\ell=1}^\infty\la_\ell^{2\ga}\left|E_{\al,\al}(-\la_\ell(z-\ze)^\al)\left(\bm B\cdot\nb\left(v_k^n-v_k^{n-1}\right)(\ze,\,\cdot\,),\vp_\ell\right)\right|^2\right)^{1/2}|\rd\ze|\right|\\
& \le C\int_0^{r_k}(r_k-\tau)^{\al-1}\\
& \quad\,\times\left(\sum_{\ell=1}^\infty\la_\ell^{2\ga}\left|E_{\al,\al}(-\la_\ell(r_k-\tau)^\al\e^{\ri\al\be})\left(\bm B\cdot\nb\left(v_k^n-v_k^{n-1}\right)(t_{2k-1}+\ve_k+\tau\,\e^{\ri\be},\,\cdot\,),\vp_\ell\right)\right|^2\right)^{1/2}\,\rd\tau.
\end{align*}
Therefore, applying \cite[Theorem 1.6]{P}, we get
\begin{align*}
\left\|\left(v_k^{n+1}-v_k^n\right)(z,\,\cdot\,)\right\|_{\cD(\cA^\ga)} & \le C\|\bm B\|_{(L^\infty(\Om))^d}\int_0^{r_k}\f{(r_k-\tau)^{\al(1-\ga)-1}}{\Ga(\al(1-\ga))}\\
& \quad\,\times\left\|\left(v_k^n-v_k^{n-1}\right)(t_{2k-1}+\ve_k+\tau\,\e^{\ri\be},\,\cdot\,)\right\|_{H^{2\ga}(\Om)}\,\rd\tau\\
& \le C\|\bm B\|_{(L^\infty(\Om))^d}\int_0^{|z-t_{2k-1}-\ve_k|}\f{(|z-t_{2k-1}-\ve_k|-\tau)^{\al(1-\ga)-1}}{\Ga(\al(1-\ga))}\\
& \quad\,\times\left\|(v_k^n-v_k^{n-1})(t_{2k-1}+\ve_k+\tau\,\e^{\ri\be},\,\cdot\,)\right\|_{H^{2\ga}(\Om)}\,\rd\tau
\end{align*}
for $z\in D_{k,\te}$. Using this estimate and applying some arguments similarly to the proof of \cite[Proposition 1]{FK}, we can prove by iteration that
\begin{align*}
\|v_k^{n+1}(z,\,\cdot\,)-v_k^n(z,\,\cdot\,)\|_{H^{2\ga}(\Om)} & \le C^n\|\bm B\|_{(L^\infty(\Om))^d}^n\int_0^{|z-t_{2k-1}-\ve_k|}\f{(|z-t_{2k-1}-\ve_k|-\tau)^{n\al(1-\ga)-1}}{\Ga(n\al(1-\ga))}\\
& \quad\,\times\left\|(v_k^1-v_k^0)(t_{2k-1}+\ve_k+\tau\,\e^{\ri\be},\,\cdot\,)\right\|_{H^{2\ga}(\Om)}\,\rd\tau\\
& \le C^n\|\bm B\|_{(L^\infty(\Om))^d}^n\int_0^{|z-t_{2k-1}-\ve_k|}\f{(|z-t_{2k-1}-\ve_k|-\tau)^{n\al(1-\ga)-1}}{\Ga(n\al(1-\ga))}\\
& \quad\,\times\left\|(\wt u_k+\wt y_k)(t_{2k-1}+\ve_k+\tau\,\e^{\ri\be},\,\cdot\,)\right\|_{H^{2\ga}(\Om)}\,\rd\tau.
\end{align*}
Combining this last estimate with the fact that $\wt u_k,\wt y_k\in C^1(\ov{D_{k,\te}}\,;H^{2\ga}(\Om))$, we deduce \eqref{iteration-esti}.
\end{proof}

\section{Proofs of of the main results}\label{sec-proof}

\begin{proof}[Proof of Theorem \ref{t1}]
We dived the proof into four steps.\smallskip

{\bf Step 1}\ \ We start by proving that \eqref{t1d} implies
\begin{equation}\label{t1e}
a^1\pa_\nu u_k^1=a^2\pa_\nu u_k^2\quad\mbox{on }\BR_+\times\Ga_\out,\ k\in\BN,
\end{equation}
where $u_k^j$ ($j=1,2$, $k\in\BN$) is the solution of \eqref{eq2} with $(\rho,a,c)=(\rho^j,a^j,c^j)$ and $j=1,2$. We will prove \eqref{t1e} by induction. For $k=1$, using the properties of the sequence $\{\psi_k\}_{k\in\BN}$, we observe that
\[
\psi_k=0\quad\mbox{in }(0,t_2),\ \forall\,k\ge2.
\]
Therefore, we have $u_1^j=u^j$ in $(0,t_2)\times\Om$ ($j=1,2$), and the condition \eqref{t1d} implies
\begin{equation}\label{t1f}
a^1\pa_\nu u_1^1=a^2\pa_\nu u_1^2\quad\mbox{on }(0,t_2)\times\Ga_\out.
\end{equation}
On the other hand, from Proposition \ref{l1} we know that $u_1^j\in C^\om((t_1+\ve_1,\infty);H^2(\Om))$, $j=1,2$. Thus, it follows from the trace theorem that $t\longmapsto\pa_\nu u_1^j(t,\,\cdot\,)|_{\Ga_\out}\in C^\om((t_1+\ve_1,\infty);L^2(\Ga_\out))$ for $j=1,2$, and the condition \eqref{t1f} implies \eqref{t1e} for $k=1$. Now assume that for some $\ell\in\BN$, the condition \eqref{t1e} is fulfilled for all $k=1,2,\ldots,\ell$. Since
\[
\psi_k=0\quad\mbox{in }(0,t_{2\ell+2}),\ \forall\,k\ge\ell+2,
\]
we know that
\[
\sum_{k=1}^{\ell+1}u_k^j=u^j\quad\mbox{in }(0,t_{2\ell+2})\times\Om.
\]
Therefore, \eqref{t1d} implies
\[
\sum_{k=1}^{\ell+1}a^1\pa_\nu u_k^1=\sum_{k=1}^{\ell+1}a^2\pa_\nu u_k^2\quad\mbox{in }(0,t_{2\ell+2})\times\Ga_\out.
\]
Then, by the inductive assumption, we deduce that
\[
a^1\pa_\nu u_{\ell+1}^1=a^2\pa_\nu u_{\ell+1}^2\quad\mbox{in }(0,t_{2\ell+2})\times\Ga_\out.
\]
Applying again Proposition \ref{l1}, we deduce that $t\longmapsto\pa_\nu u_{\ell+1}^j(t,\,\cdot\,)|_{\Ga_\out}\in C^\om((t_{2\ell+1}+\ve_\ell,\infty);L^2(\Ga_\out))$ for $j=1,2$, and we conclude \eqref{t1e} for $k=\ell+1$. This proves that \eqref{t1e} holds for all $k\in\BN$.\smallskip

{\bf Step 2}\ \ For $j=1,2$ and $\xi>0$, consider the boundary value problem
\begin{equation}\label{t1h}
\begin{cases}
-\rdiv(a^j\nb U^j(\xi))+(\xi^\al\rho^j+c^j)U^j(\xi)=0 & \mbox{in }\Om,\\
U^j(\xi)=\chi\, h & \mbox{on }\pa\Om,
\end{cases}\quad h\in H^{3/2}(\pa\Om).
\end{equation}
We associate this problem with the Dirichlet-to-Neumann map
\[
\La^j(\xi):h\longmapsto\left.a^j\pa_\nu U^j(\xi)\right|_{\Ga_\out},\quad j=1,2,\ \xi>0.
\]
In this step, we show that \eqref{t1d} implies
\begin{equation}\label{t1i}
\La^1(\xi)=\La^2(\xi),\quad j=1,2,\ \xi>0.
\end{equation}
For $j=1,2$, define the operator $\cA^j$ by
\[
\cA^j w:=\f{-\rdiv(a^j\nb w)+c^j w}{\rho^j},\quad w\in\cD(\cA^j)
\]
with the domain $\cD(\cA^j)=\{w\in H^1_0(\Om)\mid\rdiv(a^j\nb w)\in L^2(\Om)\}$ acting on $L^2(\Om;\rho^j\,\rd\bm x)$. Then we associate these operators with the eigensystems $\{(\la_\ell^j,\vp_\ell^j)\}_{\ell\in\BN}$. Similarly to the proof of Proposition \ref{l1}, for all $k\in\BN$, $u^j_k$ can be decomposed into $u^j_k=v^j_k+w^j_k$ with $v^j_k=-b_k\,\psi_k\,G^j_k$, where $G^j_k$ and $w^j_k$ solve
\[
\begin{cases}
-\rdiv\left(a^j\nb G^j_k\right)+c^j G^j_k=0 & \mbox{in }\Om,\\
G^j_k=-\chi\,\eta_k & \mbox{on }\pa\Om
\end{cases}
\]
and
\[
\begin{cases}
\rho^j\pa_t^\al w^j_k-\rdiv\left(a^j\nb w^j_k\right)+c^j w^j_k=b_k(\pa_t^\al\psi_k)G^j_k & \mbox{in }\BR_+\times\Om,\\
\begin{cases}
w^j_k=0 & \mbox{if }0<\al\le1,\\
w^j_k=\pa_t w^j_k=0 & \mbox{if }1\le\al\le2
\end{cases} & \mbox{in }\{0\}\times\Om,\\
w^j_k=0 & \mbox{on }\BR_+\times\pa\Om,
\end{cases}
\]
respectively. For $\ga\in(3/4,1)$, using \cite[Theorem 1.6]{P}, we deduce that
\begin{align*}
\|w^j_k(t,\,\cdot\,)\|_{H^{2\ga}(\Om)}^2 & \le C\|w^j_k(t,\,\cdot\,)\|_{\cD((\cA^j)^\ga)}^2\\
& \le C|b_k|^2\sum_{\ell=1}^\infty\left|\int_0^t\left(\la^j_\ell\right)^\ga(t-s)^{\al-1}E_{\al,\al}\left(-\la^j_\ell(t-s)^\al\right)\pa_s^\al\psi_k(s)\,\rd s\right|^2\left|\left(G^j_k,\rho^j\vp_\ell^j\right)\right|^2\\
& \le C_k\left(\int_0^t(t-s)^{\al(1-\ga)-1}|\pa_s^\al\psi_k(s)|\,\rd s\right)^2\sum_{\ell=1}^\infty\left|\left(G^j_k,\rho^j\vp_\ell^j\right)\right|^2\\
& \le C_k\|\psi_k\|_{W^{2,\infty}(\BR)}^2t^{2(1-\ga)}\|\rho^j\|_{L^\infty(\Om)}\|G^j_k\|_{L^2(\Om)}^2.
\end{align*}
This proves that
\[
t\longmapsto\e^{-\xi t}w^j_k(t,\,\cdot\,)\in L^1(\BR_+;H^{2\ga}(\Om)),\quad\xi>0,
\]
and we deduce that
\[
t\longmapsto\e^{-\xi t}u^j_k(t,\,\cdot\,)\in L^1(\BR_+;H^{2\ga}(\Om)),\quad\xi>0.
\]
Using the continuity of $H^{2\ga}(\Om)\ni w\longmapsto\pa_\nu w|_{\Ga_\out}\in L^2(\Ga_\out)$, we deduce that
\[
t\longmapsto\e^{-\xi t}\pa_\nu u^j_k(t,\,\cdot\,)\in L^1(\BR_+;L^2(\Ga_\out)),\quad\xi>0,\ k\in\BN.
\]
Therefore, applying the Laplace transform $\cL$ in time on both sides of \eqref{t1e}, we obtain
\[
\La^1(\xi)b_k(\cL\psi_k)(\xi)\eta_k=\La^2(\xi)b_k(\cL\psi_k)(\xi)\eta_k,\quad\xi>0,\ k\in\BN,
\]
where
\[
\cL[\psi_k](\xi):=\int_0^\infty\e^{-\xi t}\psi_k(t)\,\rd t.
\]
Applying the fact that $\psi_k\ge0,\not\equiv0$, we deduce that $\cL[\psi_k](\xi)>0$ for all $\xi>0$. Then, using the fact $b_k>0$ and the linearity of $\La^j(\xi)$ ($j=1,2$), we get
\[
\La^1(\xi)h=\La^2(\xi)h,\quad\xi>0,\ \forall\,h\in\rspan\{\eta_k\}_{k\in\BN}.
\]
Finally, the density of $\rspan\{\eta_k\}$ in $H^{3/2}(\pa\Om)$ implies \eqref{t1i}.\smallskip

{\bf Step 3}\ \ In this step, by $\{\la^j_k\}_{k\in\BN}$ and $m^j_k\in\BN$ we denote the strictly increasing sequence of the eigenvalues of $\cA^j$ and the algebraic multiplicity of $\la^j_k$, respectively. For each eigenvalue $\la^j_k$, we introduce a family $\{\vp^j_{k,\ell}\}_{\ell=1}^{m^j_k}$ of eigenfunctions of $\cA^j$, i.e.,
\[
\cA^j\vp^j_{k,\ell}=\la^j_k\vp^j_{k,\ell},\quad\ell=1,\ldots,m^j_k,
\]
which forms an orthonormal basis in $L^2(\Om;\rho^j\,\rd\bm x)$ of the algebraic eigenspace of $\cA^j$ associated with $\la^j_k$. We fix also
\[
\Te^j_k(\bm x,\bm x'):=a^j(\bm x)a^j(\bm x')\sum_{\ell=1}^{m^j_k}\pa_\nu\vp^j_{k,\ell}(\bm x)\pa_\nu\vp^j_{k,\ell}(\bm x'),\quad\bm x,\bm x'\in\pa\Om,\ k\in\BN.
\]
In this step, we show that \eqref{t1d} implies
\begin{alignat}{2}
& \la^1_k=\la^2_k,\quad m^1_k=m^2_k, & \quad & k\in\BN,\label{t1j}\\
& \Te^1_k(\bm x,\bm x')=\Te^2_k(\bm x,\bm x'), & \quad & \bm x'\in\Ga'_\rin,\ \bm x\in\Ga_\out,\ k\in\BN.\label{t1k}
\end{alignat}
Taking the scalar product of $\vp^j_{k,\ell}$ with $U^j(\xi)$ solving \eqref{t1h}, one can check that
\[
U^j(\xi)=-\sum_{k=1}^\infty\f{\sum_{\ell=1}^{m^j_k}\left\langle\chi\,h,a^j\pa_\nu\vp^j_{k,\ell}\right\rangle\vp^j_{k,\ell}}{\xi^\al+\la^j_\ell}.
\]
Therefore, differentiating with respect to $\xi$ on both side yields
\[
\pa_\xi U^j(\xi)=\al\,\xi^{\al-1}\sum_{k=1}^\infty\f{\sum_{\ell=1}^{m^j_k}\left\langle\chi\,h,a^j\pa_\nu\vp^j_{k,\ell}\right\rangle\vp^j_{k,\ell}}{(\xi^\al+\la^j_\ell)^2}.\]
On the other hand, using the fact that
\[
\sum_{k=1}^\infty\left|\f{\sum_{\ell=1}^{m^j_k}\left\langle\chi\,h,a^j\pa_\nu\vp^j_{k,\ell}\right\rangle}{\xi^\al+\la^j_\ell}\right|^2=\|U^j(\xi)\|_{L^2(\Om;\rho^j\,\rd\bm x)}^2<\infty,
\]
we deduce that the sequence
\[
\sum_{k=1}^\infty\f{\sum_{\ell=1}^{m^j_k}\left\langle\chi\,h,a^j\pa_\nu\vp^j_{k,\ell}\right\rangle\vp^j_{k,\ell}}{(\xi^\al+\la^j_\ell)^2}
\]
converges in the sense of $\cD(\cA^j)$. Using the continuous embedding of $\cD(\cA^j)$ into $H^2(\Om)$ and the continuity of $H^2(\Om)\ni w\longmapsto\pa_\nu w|_{\Ga_\out}\in L^2(\Ga_\out)$, we deduce that
\begin{equation}\label{t1l}
\left.\pa_\xi\pa_\nu U^j(\xi)\right|_{\Ga_\out}=\left.\pa_\nu\pa_\xi U^j(\xi)\right|_{\Ga_\out}=\al\,\xi^{\al-1}\sum_{k=1}^\infty\f{\sum_{\ell=1}^{m^j_k}\left\langle\chi\,h,a^j\pa_\nu\vp^j_{k,\ell}\right\rangle\left.\pa_\nu\vp^j_{k,\ell}\right|_{\Ga_\out}}{(\xi^\al+\la^j_\ell)^2}.
\end{equation}
On the other hand, in view of \eqref{t1i}, we have
\[
\left.a^1\pa_\nu U^1(\xi)\right|_{\Ga_\out}=\left.a^2\pa_\nu U^2(\xi)\right|_{\Ga_\out},\quad\xi>0,\ h\in H^{3/2}(\pa\Om).
\]
Differentiating the above identity with respect to $\xi$ and applying \eqref{t1l}, we deduce
\[
\sum_{k=1}^\infty\f{\sum_{\ell=1}^{m^1_k}\left\langle\chi\,h,a^1\pa_\nu\vp^1_{k,\ell}\right\rangle\left.a^1\pa_\nu\vp^1_{k,\ell}\right|_{\Ga_\out}}{(\xi^\al+\la_\ell^1)^2}=\sum_{k=1}^\infty\f{\sum_{\ell=1}^{m^2_k}\left\langle\chi\,h,a^2\pa_\nu\vp^2_{k,\ell}\right\rangle\left.a^2\pa_\nu\vp^2_{k,\ell}\right|_{\Ga_\out}}{(\xi^\al+\la_\ell^2)^2}
\]
for $h\in H^{3/2}(\pa\Om)$ and $\xi>0$. Combining this identity with the arguments used in Step 4 in the proof of \cite[Theorem 2.2]{KOSY}, we deduce that \eqref{t1j}--\eqref{t1k} holds true.\smallskip

{\bf Step 4}\ \ We will complete the proof Theorem \ref{t1} in this step. Repeating some arguments used at the end of the proof of \cite[Theorem 1.1]{CK1}, we deduce that, for $j=1,2$, there exists an eigensystem $\{(\la^j_k,\vp^j_k)\}_{k\in\BN}$ of $\cA^j$ such that
\begin{equation}\label{t1m}
\la^1_k=\la^2_k,\quad a^1\pa_\nu\vp_k^1=a^2\pa_\nu\vp_k^2\mbox{ on }\pa\Om,\quad k\in\BN.
\end{equation}
Now let us recall the following inverse spectral result which follows from \cite[Corollaries 1.5--1.7]{CK2}.

\begin{lemma}\label{p1}
Under the conditions of Theorem \ref{t1}, assume that either of the three assumptions (i), (ii) or (iii) holds. Then \eqref{t1m} implies $(\rho^1,a^1,c^1)=(\rho^2,a^2,c^2)$.
\end{lemma}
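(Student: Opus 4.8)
The plan is to deduce the lemma directly from the inverse spectral theory of \cite{CK2}, since the content of \eqref{t1m} is precisely the equality of the boundary spectral data (BSD) of the two operators $\cA^1$ and $\cA^2$. Recall that the BSD attached to $\cA^j$ consists of the eigenvalues $\{\la^j_k\}_{k\in\BN}$ together with the weighted conormal traces $a^j\pa_\nu\vp^j_k|_{\pa\Om}$ of an eigenbasis $\{\vp^j_k\}$ normalized in $L^2(\Om;\rho^j\,\rd\bm x)$. The first step is therefore simply to record that the eigensystem produced at the beginning of Step 4 makes \eqref{t1m} assert exactly that the BSD of $\cA^1$ and of $\cA^2$ coincide.

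The key structural fact, and the reason three separate cases appear, is the gauge freedom of operators of the form $\rho^{-1}(-\rdiv(a\nb\,\cdot\,)+c)$: such operators admit isospectral transformations — a multiplicative substitution $u\mapsto\ka\,u$ together with a change of the weight $\rho$ — which preserve the BSD while altering the triple $(\rho,a,c)$. This is exactly the obstruction recalled before the statement of Theorem \ref{t1}, of the same nature as the degree of freedom appearing in \eqref{kappa}--\eqref{mu_c}, and it forces us to fix one coefficient and to impose boundary normalizations that freeze the substitution $\ka$. The conditions \eqref{t1b}--\eqref{t1c} are tailored to this end: they encode vanishing of $\ka-1$ and of $\pa_\nu\ka$ at $\pa\Om$ at the appropriate order, so that in case (i) the gauge is frozen by prescribing $\rho$ together with $\nb a$, in case (ii) by prescribing $a$ together with the second-order flatness of $\rho^1-\rho^2$, and in case (iii) by prescribing $c$ while imposing both.

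With this dictionary in hand, I would invoke \cite[Corollaries 1.5--1.7]{CK2}, one corollary per case, each asserting that under the corresponding normalization the equality of the BSD forces the two remaining coefficients to agree. Together with the coefficient fixed by hypothesis, this yields $(\rho^1,a^1,c^1)=(\rho^2,a^2,c^2)$ and finishes the proof.

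The main difficulty is not any new estimate but the careful matching of frameworks. One must verify that the observable $a^j\pa_\nu\vp^j_k$ is the conormal derivative used in \cite{CK2}, that the eigenfunctions are normalized in the weighted space $L^2(\Om;\rho^j\,\rd\bm x)$ rather than in $L^2(\Om)$, and that the boundary conditions \eqref{t1b}--\eqref{t1c} are precisely the normalizations under which \cite{CK2} eliminates the isospectral gauge. Once this compatibility is confirmed, the three corollaries apply verbatim; should a discrepancy appear, I would absorb it into the substitution $u\mapsto\ka\,u$ and track the induced change of $(\rho,a,c)$, using \eqref{t1b}--\eqref{t1c} to force $\ka\equiv1$.
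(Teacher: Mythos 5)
Your proposal takes essentially the same route as the paper: the paper offers no independent argument for Lemma \ref{p1} either, but simply recalls it as a known inverse spectral result following from \cite[Corollaries 1.5--1.7]{CK2}, which is exactly what you invoke case by case. Your added commentary on the gauge obstruction and on how \eqref{t1b}--\eqref{t1c} freeze it is consistent with the paper's own remarks preceding Theorem \ref{t1}, so there is no gap to report.
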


Applying this result, we can complete Theorem \ref{t1}.
\end{proof}

\begin{proof}[Proof of Theorem \ref{t2}]
Repeating the arguments used in the proof of Theorem \ref{t1} and utilizing Proposition \ref{l2}, we deduce
\begin{equation}\label{t2c}
\pa_\nu v_k^1=\pa_\nu v_k^2\quad\mbox{on }\BR_+\times\pa\Om,\ k\in\BN,
\end{equation}
where $v_k^j$ ($j=1,2$, $k\in\BN$) solves \eqref{eq3} with $(\al,\rho,\bm B)=(\al^j,\rho^j,\bm B^j)$ ($j=1,2$). For $j=1,2$ and $\xi>0$, consider the boundary value problem
\[%begin{equation}\label{t1h}
\begin{cases}
(-\tri+\xi^{\al^j}\rho^j)V^j(\xi)+\bm B^j\cdot\nb V^j(\xi)=0 & \mbox{in }\Om,\\
V^j(\xi)=\phi & \mbox{on }\pa\Om,
\end{cases}\quad \phi\in H^{3/2}(\pa\Om).
\]%end{equation}
We associate this problem with the Dirichlet-to-Neumann map
\[
\cN^j(\xi):\phi\longmapsto\left.\pa_\nu V^j(\xi)\right|_{\pa\Om},\quad j=1,2,\ \xi>0.
\]
In a similar manner to the proof of Theorem \ref{t1}, we can prove that \eqref{t2c} implies
\begin{equation}\label{t2d}
\cN^1(\xi)=\cN^2(\xi),\quad\xi>0.
\end{equation}
Using the elliptic regularity of the operator $-\tri+\bm B^j\cdot\nb$, one can check the continuity of $[0,\infty)\ni\xi\longmapsto\cN^j(\xi)\in \cB(H^{3/2}(\pa\Om); H^{1/2}(\pa\Om))$ ($j=1,2$). Therefore, sending $\xi\to0$ in condition \eqref{t2d}, we obtain
\begin{equation}\label{t2e}
\left.\pa_\nu V^1(0)\right|_{\pa\Om}=\left.\pa_\nu V^2(0)\right|_{\pa\Om}
\end{equation}
for any $V(0)$ solving
\[
\begin{cases}
-\tri V(0)+\bm B^j\cdot\nb V(0)=0 & \mbox{in }\Om,\\
V(0)=\phi & \mbox{on }\pa\Om,
\end{cases}\quad \phi\in H^{3/2}(\pa\Om).
\]
Combining \eqref{t2e} with \cite[Theorem 1.1]{Po}, we deduce that $\bm B^1=\bm B^2$. Now choosing $\xi=1$ and applying \cite[Proposition 2.1]{Po} (see also \cite{KU,Sa} for equivalent results stated for magnetic Shr\"odinger operators), we deduce that $\rho^1=\rho^2$. Finally, choosing $\xi=\e$ and applying \cite[Proposition 2.1]{Po}, we get $\rho^1\exp(\al^1)=\rho^2\exp(\al^2)=\rho^1\exp(\al^2)$, indicating $\exp(\al^1)=\exp(\al^2)$ and thus $\al^1=\al^2$. This completes the proof of Theorem \ref{t2}.
\end{proof}

\begin{proof}[Proof of Corollary \ref{t4}]
For $j=1,2$, we define the operator $\cA^j$ as
\[
\cD(\cA^j)=H^2(\cM^j)\cap H^1_0(\cM^j),\quad\cA^j:=-\tri_{g^j,\mu^j}+c^j.
\]
Here we consider the weighted measure $\mu^j\,\rd V_{g^j}$, where $\rd V_{g^j}$ is the Riemannian volume measure associated with $(\cM^j,g^j)$. Parallel to Step 3 in the proof of Theorem \ref{t1}, we denote the eigensystem of $\cA^j$ ($j=1,2$) by $\{(\la^j_k,\{\vp^j_{k,\ell}\}_{\ell=1}^{m^j_k})\}_{k\in\BN}$, where $\{\la^j_k\}$ is strictly increasing and $m^j_k$ is the algebraic multiplicity of $\la^j_k$. Similarly, again $\{\vp^j_{k,\ell}\}_{\ell=1}^{m^j_k}$ forms an orthonormal basis in $L^2(\cM^j)$ associated with $\la^j_k$. Fixing
\[
\Te^j_k(\bm x,\bm x'):=\sum_{\ell=1}^{m^j_k}\pa_\nu\vp^j_{k,\ell}(\bm x)\pa_\nu\vp^j_{k,\ell}(\bm x'),\quad\bm x,\bm x'\in\pa\cM^1,\ k\in\BN
\]
and repeating the arguments used in the proof of Theorem \ref{t1}, we can prove that \eqref{t4d} implies the conditions \eqref{t1j}--\eqref{t1k}. Thus, in a similar way to the proof of \cite[Theorem 1.2]{KOSY}, we apply some results in \cite{KKL} to check that \eqref{t1j}--\eqref{t1k} imply that $(\cM^1,g^1)$ and $(\cM^2,g^2)$ are isometric and conditions \eqref{kappa}--\eqref{mu_c} are fulfilled. This completes the proof of Corollary \ref{t4}.
\end{proof}

\begin{proof}[Proof of Corollary \ref{t5}]
Using the notation of the proof of Corollary \ref{t4} with $\cA^j:=-\tri_{g^j}$ acting on $L^2(\cM^j)$ and $\cD(\cA^j):=H^1_0(\cM^j)\cap H^2(\cM^j)$, we can prove again that \eqref{t4d} implies \eqref{t1j}--\eqref{t1k}. For $j=1,2$, consider the initial-boundary value problem
\[
\begin{cases}
(\pa_t^2-\tri_{g^j})u^j=0 & \mbox{in }(0,\infty)\times\cM^j,\\
u^j=\pa_t u^j=0 & \mbox{in }\{0\}\times\cM^j,\\
u^j=\Phi & \mbox{on }(0,\infty)\times\pa\cM^j
\end{cases}
\]
and the associated hyperbolic partial Dirichlet-to-Neumann map
\[
\cN^j:C^\infty(\BR_+\times\Ga'_\rin)\ni \Phi\longmapsto\left.\pa_\nu u^j\right|_{(0,\infty)\times\Ga_\out}.
\]
Repeating some arguments used in \cite[Theorem 5.3]{KOSY}, we can show that \eqref{t1j}--\eqref{t1k} imply $\cN^1=\cN^2$. Thus, it follows from \cite[Theorem 1]{LO} that $(\cM^1,g^1)$ and $(\cM^2,g^2)$ are isometric. This completes the proof of Corollary \ref{t5}.
\end{proof}

\begin{proof}[Proof of Corollary \ref{t6}]
Using the notation of the proof of Corollary \ref{t4} with $\cA^j:=-\tri_g-c^j$ acting on $L^2(\cM^j)$ and $\cD(\cA^j):=H^1_0(\cM^j)\cap H^2(\cM^j)$, we can prove again that \eqref{t4d} implies \eqref{t1j}--\eqref{t1k}. 
For $j=1,2$, consider the initial-boundary value problem
\[
\begin{cases}
(\pa_t^2-\tri_{g^j}+c^j)u^j=0 & \mbox{in }(0,\infty)\times\cM^j,\\
u^j=\pa_t u^j=0 & \mbox{in }\{0\}\times\cM^j,\\
u^j=\Phi & \mbox{on }(0,\infty)\times\pa\cM^j
\end{cases}
\]
and the associated hyperbolic partial Dirichlet-to-Neumann map
\[
\cN^j:C^\infty(\BR_+\times\Ga'_\rin)\ni \Phi\longmapsto\left.\pa_\nu u^j\right|_{(0,\infty)\times\Ga_\out}.
\]
Similarly to the proof of Corollary \ref{t5}, we deduce from \eqref{t1j}--\eqref{t1k} that $\cN^1=\cN^2$. Thus, it follows from \cite[Theorem 1]{KKLO} that there exists a neighborhood $U$ of $\Ga_\out$ in $\cM$ such that $c^1=c^2$ in $U$. This completes the proof of Corollary \ref{t6}.
\end{proof}

%\section{Concluding remarks}
%
%blabla...

\section*{Acknowledgments}

%The authors thank the anonymous referees for their valuable comments.
This work is partly supported by Japan Society for the Promotion of Science (JSPS) KAKENHI Grant Number JP15H05740. The first author is partially supported by  the French National
Research Agency ANR (project MultiOnde) grant ANR-17-CE40-0029.
The second, third and fourth authors are partially supported by the A3 Foresight Program ``Modeling and Computation of Applied Inverse Problems'', JSPS.
The second author is supported by National Natural Science Foundation of China (NSFC) (No.\! 11801326).
The fourth author is partly supported by NSFC (No.\! 91730303) and RUDN University Program 5-100.

%\appendix
%\section{}\label{}

\bibliographystyle{unsrt}

\end{document}